\newcommand{\ra}{\rangle}
\newcommand{\la}{\langle}
\newcommand{\R}{\mathbb{R}}
\newcommand{\N}{\mathbb{N}}
\newcommand{\mC}{\mathcal{C}}
\newcommand{\Om}{\Omega}
\newcommand{\OmT}{\Omega_T}
\newcommand{\rd}{\mathrm{d}}
\newcommand{\bqn}{\begin{equation}}
\newcommand{\eqn}{\end{equation}}
\newcommand{\bqnn}{\begin{equation*}}
\newcommand{\eqnn}{\end{equation*}}
\newcommand{\bear}{\begin{eqnarray}}
\newcommand{\eear}{\end{eqnarray}}
\newcommand{\bean}{\begin{eqnarray*}}
\newcommand{\eean}{\end{eqnarray*}}
\newcommand{\esxyp}{\end{split}}
\newcommand{\bsp}{\begin{split}}
\newcommand{\balg}{\begin{align}}
\newcommand{\ealg}{\end{align}}
\newcommand{\balgg}{\begin{align*}}
\newcommand{\ealgg}{\end{align*}}
\newcommand{\ol}{\overline}
\newcommand{\calphazwei}{\mathcal{C}^{1+\frac{\alpha}{2},2+\alpha}}
\newcommand{\alphah}{{(\frac{\alpha}{2},\alpha)}}
\newcommand{\alphaeinsh}{{(\frac{1+\alpha}{2})}}
\newcommand{\dhr}{\mathrel{\lhook\joinrel\relbar\kern-.8ex\joinrel\lhook\joinrel\rightarrow}}
\newcommand{\wt}{\widetilde}
\newcommand{\us}{\underset}
\newcommand{\sml}{\sum\limits}
\newtheorem{theorem}{Theorem}[section]
\newtheorem{corollary}[theorem]{Corollary}
\newtheorem{proposition}[theorem]{Proposition}
\newtheorem{remark}[theorem]{Remark}
\numberwithin{equation}{section}
\begin{document}
\title{On Global Solutions for Quasilinear One-Dimensional Parabolic Problems with Dynamical Boundary Conditions}
\author{Simon Gvelesiani}
\address{Leibniz Universit\"at Hannover, Institut f\" ur Angewandte Mathematik, Welfengarten 1, D--30167 Hannover, Germany} 
\email{gvelesiani@ifam.uni-hannover.de}
\author{Friedrich Lippoth}
\address{Leibniz Universit\"at Hannover, Institut f\" ur Angewandte Mathematik, Welfengarten 1, D--30167 Hannover, Germany} 
\email{lippoth@ifam.uni-hannover.de}
\author{Christoph Walker}
\address{Leibniz Universit\"at Hannover, Institut f\" ur Angewandte Mathematik, Welfengarten 1, D--30167 Hannover, Germany} 
\email{walker@ifam.uni-hannover.de}
\keywords{Dynamical boundary conditions, global solutions, Bernstein-Nagumo condition, doubling of variables}
\subjclass{}
\date{\today}
%
\begin{abstract}
We provide sufficient and almost optimal conditions for global existence of classical solutions in parabolic H\"older spaces to quasilinear one-dimensional parabolic problems with dynamical boundary conditions. 

\end{abstract}
%
\maketitle
\pagestyle{myheadings}
\markboth{\sc{S. Gvelesiani, F. Lippoth \& Ch. Walker}}{\sc{One-dimensional parabolic problems with dynamical boundary conditions}}
%
%
\section{Introduction} \label{sec:int}

The focus of the present research is on the one-dimensional quasilinear parabolic equation
\begin{equation}\label{1}
u_t-a(t,x,u,u_x)u_{xx}=f(t,x,u,u_x)\ ,\qquad (t,x)\in (0,T)\times (-\ell,\ell)\ ,
\end{equation}
supplemented with a nonlinear dynamical boundary condition
\begin{equation}\label{2}
u_t\pm b(t,x,u,u_x)u_{x}=g(t,x,u,u_x)\ ,\qquad t\in (0,T)\, ,\quad x=\pm\ell\ ,
\end{equation}
and the initial condition $u(0,\cdot)=u^0$ on $[-\ell,\ell]$, where $T>0$ and $\ell>0$ are given fixed numbers.\footnote{Here and in the following, equations at the boundary points $x=\pm\ell$ as \eqref{2} involving $\pm$ signs (or $\mp$ signs) are to be understood as two equations with a $+$ sign (or $-$ sign) at $x=+\ell$ and a $-$ sign (or $+$ sign) at $x=-\ell$. The sign convention is chosen such that $\pm u_x(\pm \ell)$ represents the ''outward'' normal derivative.} Equations of the form \eqref{1}, \eqref{2} occur in various fields of natural sciences, we refer e.g. to \cite{Baz1,Con,Esc1,Gol} and the references therein. In the past decades many different aspects of problems with dynamical boundary conditions (also in higher space dimensions) have been investigated by means of different techniques (e.g. \cite{Baz2,Biz,Biz1,Con,Esc1, Gal,Mey,Vasy,DPZ, Gui}) for well-posedness issues, also related to the sign of the function $b$ in \eqref{2} (e.g. \cite{Bel1,
 Biz,Tem,Vaz}) or for possibly degenerate equations (see \cite{Gal} and the references therein). Research has, of course, also focused on questions regarding global existence and related {\it a priori} estimates and blow up phenomena \cite{Bel1,Bel2,Con,Ell,Esc1,Esc2,Fil1,Fil2,Fil3,Mey,Pet}. None of these lists of references is complete though.

The starting point of our investigations are, on the one hand, the results of \cite{Con} for equation \eqref{1} subject to the gradient-independent dynamical boundary condition
\begin{equation}\label{2a}
u_t\pm b(t,x,u)u_{x}=g(t,x,u)\ ,\qquad t\in (0,T)\, ,\quad x=\pm\ell\ ,
\end{equation}
and, on the other hand, the results of \cite{Ter} related to \eqref{1} but subject to more standard boundary conditions of Dirichlet, Neuman, or Robin type. In \cite{Con} criteria were found for the existence of global (i.e. existing on the whole time interval $[0,T)$) classical solutions for the boundary value problem \eqref{1}, \eqref{2a}, though not restricted to one dimension. Roughly speaking, it was shown that if $a=b$ and if the growth of the right-hand side $f(t,x,u,p)$ in the gradient variable $p$ is not faster than $1+\vert p\vert^{1+\alpha}$, then bounded solutions are global provided that $\alpha=0$ in the general quasilinear case and $\alpha\in [0,1)$ in the semilinear case (i.e. if $a=b$ is also independent of $u$). Furthermore, it was shown in \cite{Con} that if $\alpha>1$, then gradients of bounded solutions may blow up in finite time and thus, solutions do not exist globally in general. At least for the case of Dirichlet or Neuman boundary conditions it is we
 ll known that a quadratic growth of the quotient $f(t,x,u,p)/a(t,x,u,p)$ as $p\rightarrow \infty$ is an almost optimal condition for global {\it a priori} estimates on the gradient of solutions to \eqref{1}, \eqref{2a}. Actually, global {\it a priori} estimates can be derived under  the Bernstein-Nagumo condition
\bqn\label{237}
\Big| \frac{f(t,x,u,p)}{a(t,x,u,p)}  \Big| \leq \psi(|p|)
\eqn
with a positive function $\psi$ obeying
\bqn\label{238}
\int\limits_{}^{\infty}\frac{\rho\,\rd \rho}{\psi(\rho)} = \infty\,, 
\eqn
in which case $\psi$ can grow even faster than quadratic. This last conditions is sharp in the sense that its violation leads in several situation to a gradient blow up, see \cite{Con}, \cite{Qui}, the introduction of \cite{Ter}, and the references given therein. Nevertheless, in the one-dimensional case with Dirichlet or Neuman boundary conditions, the Bernstein-Nagumo condition was improved in \cite{Ter} by means of the doubling of variables technique to cope with right-hand sides $f=f_1+f_2$, where only $f_1$ satisfies \eqref{237} and $f_2$ enjoys some monotonicity properties. The long time behavior of solutions also depends on the boundary conditions \cite{Qui}.

The aim of the present paper is to prove similar results as just described on existence and {\it a priori} estimates for the case of \eqref{1} subject to the
nonlinear dynamical boundary condition \eqref{2} or \eqref{2a}. More precisely, we first prove with Theorem~\ref{ExLTheorem1} an existence result for \eqref{1} subject to \eqref{2} in  parabolic H\"older spaces (recalled in the Appendix~\ref{app}) which includes a general global existence criterion that can be simplified when restricting to the gradient independent boundary condition \eqref{2a}. In Section~\ref{sec:globsol} we then show that under the Bernstein-Nagumo condition one may derive $L_\infty$-estimates on the gradient of bounded classical solutions to \eqref{1} subject to \eqref{2}, see Theorem~\ref{A1}. A similar result is obtained in Theorem~\ref{Theorem_11} for a weaker version of the Bernstein-Nagumo condition and in Theorem~\ref{Theorem_12} for mixed boundary conditions, that is, when a dynamic boundary condition is imposed on one boundary point and e.g. a Dirichlet condition on the other. These gradient estimates can be used to derive H\"older estimates on th
 e gradient (see Corollary~\ref{Theorem_15}) which, for the special case of \eqref{1} subject to \eqref{2a}, imply that bounded solutions exist globally in time, see Corollary~\ref{c3}. Finally, in Proposition~\ref{Theorem_14b} we provide conditions under which solutions to \eqref{1} subject to \eqref{2a} are bounded and thus exist globally in time as stated in Corollary~\ref{C33}.


\section{{Local Solutions}} \label{sec:locsol}

In this section we show a local existence result in the parabolic H\"older space $\calphazwei(\Om_\tau)$ (see the Appendix for a definition and properties) with $\tau>0$ and $\Omega_\tau:= (0,\tau)\times (-\ell,\ell)$  for equation \eqref{1} subject to the dynamical boundary condition \eqref{2}. To do so, we first consider the corresponding linear problem
\begin{align}
u_t-a(t,x)u_{xx}&=f(t,x), \qquad (t,x)\in (0,T)\times (-\ell,\ell), \label{ExL7}\\
u_t\pm b(t,x)u_x&=g(t,x), \qquad\phantom{bla} t\in (0,T), \quad x=\pm \ell,\label{ExL8}\\
u(0,x)&=u^0(x), \qquad\quad\;\;\;\, x\in [-\ell,\ell] \label{ExL9}
\end{align}
and  recall the following result:

\begin{proposition}[\hspace{-.01mm}{{\bf\cite[Theorem 1.1]{Biz}}}]\label{ExLin1}
Let $\alpha\in (0,1)$. Let $a\in \mC^{\frac{\alpha}{2}, \alpha}(\Om_T)$ and $b(\cdot,\pm\ell) \in C^{\frac{1+\alpha}{2}}([0,T])$ satisfy
\bqn\label{62}
m^{-1}\leq a(t,x) \leq m, \quad  (t,x)\in \Om_T, \qquad  m^{-1} \leq b(t,\pm \ell) \leq m, \quad t\in (0,T),
\eqn
for some $m>0$ and let $f\in \mC^{\frac{\alpha}{2}, \alpha}(\Om_T)$ and $g(\cdot,\pm\ell)\in C^{\frac{1+\alpha}{2}}([0,T])$. If $u^0\in C^{2+\alpha}([-\ell,\ell])$ satisfies the compatibility condition
\begin{equation}\label{141}
\begin{split}
\big(au_{xx}^0+f\big)(0,\pm \ell) = \big(\mp bu_{x}^0+g\big)(0,\pm \ell),
\end{split}
\end{equation}
then there is a unique solution $u\in \mC^{1+\frac{\alpha}{2}, 2+\alpha}(\OmT)$ to \eqref{ExL7}--\eqref{ExL9} such that $u_t(\cdot,\pm\ell)\in C^{\frac{1+\alpha}{2}}([0,T])$. Moreover, there is a constant
\bqnn
c_l:=c_l\left(T,m, |a|^{\alphah}_{\OmT},|b(\cdot,\pm\ell)|^{\alphaeinsh}_{[0,T]}\right),
\eqnn
such that
\bqn\label{ExLin2}
|u|_{\OmT}^{(1+\frac{\alpha}{2}, 2+\alpha)}+|u_t(\cdot,\pm\ell)|_{[0,T]}^{(\frac{1+\alpha}{2})} \leq c_l\big(|f|_{\OmT}^{(\frac{\alpha}{2},\alpha)}+
|u^0|^{(2+\alpha)}_{[-\ell,\ell]}+|g(\cdot,\pm\ell)_{[0,T]}^{(\frac{1+\alpha}{2})}\big).
\eqn
\end{proposition}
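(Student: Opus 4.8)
The plan is to split the statement into its two components---the a priori estimate \eqref{ExLin2} and the solvability of \eqref{ExL7}--\eqref{ExL9}---and to treat the estimate first, since once it is available uniformly along a deformation of the coefficients, existence follows by the method of continuity, while uniqueness is immediate (apply the estimate to the difference of two solutions, for which $f$, $g$, and $u^0$ vanish).

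First I would prove \eqref{ExLin2} by localization and freezing of coefficients. Using a finite partition of unity on $\OmT$ subordinate to small parabolic cylinders, one separates interior cylinders from the two boundary cylinders at $x=\pm\ell$. On interior pieces the boundary condition plays no role, and the classical interior parabolic Schauder estimate for $u_t-au_{xx}=f$ applies after freezing $a$ at the centre of the cylinder; the seminorm $|a|^{\alphah}_{\OmT}$ together with the smallness of the cylinder controls the commutator error produced by the freezing. The genuinely new contribution comes from the boundary cylinders.

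At a boundary point I would translate $\pm\ell$ to the origin, straighten the domain to the half-line $\{x>0\}$, and freeze $a$ and $b(\cdot,\pm\ell)$ to constants $a_0,b_0$ obeying \eqref{62}. Because $b_0>0$, the resulting model problem is
\begin{align*}
v_t - a_0 v_{xx} &= F, && (t,x)\in(0,\infty)\times(0,\infty),\\
v_t - b_0 v_x &= G, && x=0 ,
\end{align*}
which I would solve by Laplace transform in $t$; the one-dimensional feature is that there is no tangential Fourier variable, so the transformed interior equation is merely an ODE in $x$. Its decaying solution gives $\hat v_x(0)=-\sqrt{\lambda/a_0}\,\hat v(0)+\dots$, whence the transformed boundary condition becomes $\bigl(\lambda+b_0\sqrt{\lambda/a_0}\bigr)\hat v(0)=\hat G+\dots$. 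The symbol $\lambda+b_0\sqrt{\lambda/a_0}$ has strictly positive real part for $\mathrm{Re}\,\lambda\ge 0$, $\lambda\neq0$---this is the Lopatinskii--Shapiro complementing condition for the dynamical boundary operator---so the model problem is uniquely solvable and the associated kernel bounds yield the $\calphazwei$ estimate for $v$ together with the trace bound $v_t(\cdot,0)\in C^{\frac{1+\alpha}{2}}$. The parabolic scaling $t\sim x^2$ shows that $v_t$ and $b_0v_x$ carry the same weight at the boundary, which is exactly why $g$ is measured in $C^{\frac{1+\alpha}{2}}$ and why the trace $v_t(\cdot,\pm\ell)$ inherits that regularity.

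The hard part will be this boundary analysis: unlike the Dirichlet, Neumann, or Robin cases, the dynamical condition contains $u_t$, so one must carry the extra boundary unknown $u_t(\cdot,\pm\ell)$ and verify that the complementing condition delivers a sharp, scale-invariant estimate for it. Granting the model estimates, I would patch the local bounds together, absorb the lower-order and commutator terms arising from the variable coefficients using the H\"older seminorms of $a$ and $b$ and a parabolic interpolation inequality for the intermediate norms, and so obtain \eqref{ExLin2} with $c_l$ of the stated dependence. For existence I would then run the method of continuity along $a_s=(1-s)a_0+s\,a$ and $b_s=(1-s)b_0+s\,b$, $s\in[0,1]$: the bounds \eqref{62} are preserved along the homotopy, so \eqref{ExLin2} holds with a constant uniform in $s$; the set of $s$ for which the problem is solvable in $\calphazwei(\OmT)$ is therefore open and closed in $[0,1]$, and it is nonempty because $s=0$ is the constant-coefficient model solved above. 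Finally, the compatibility condition \eqref{141} is precisely the corner compatibility at $(0,\pm\ell)$ obtained by evaluating both the equation and the boundary condition at $t=0$ and equating the two expressions for $u_t$; it is necessary and sufficient for the solution to belong to $\calphazwei(\OmT)$ up to the initial time.
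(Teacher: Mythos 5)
Your proposal is sound: the paper itself gives no proof of Proposition~\ref{ExLin1}, citing instead \cite[Theorem 1.1]{Biz} (with a detailed one-dimensional proof in \cite{GveliDiss}), and your sketch---localization and freezing of coefficients, explicit Laplace-transform solution of the half-line model problem with the symbol $\lambda+b_0\sqrt{\lambda/a_0}$ verifying the Lopatinskii--Shapiro condition (where the positivity of $b$ is essential), Schauder patching, and the method of continuity---is precisely the strategy of that reference. The only glossed points are routine for this machinery: the kernel estimates behind the model-problem H\"older bound, and the solvability of the constant-coefficient problem on the full interval at $s=0$, which in \cite{Biz} is handled via a regularizer (approximate inverse built from the two half-line and the interior model solutions) rather than by direct solvability.
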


Proposition~\ref{ExLin1} is stated and proved in  \cite[Theorem 1.1]{Biz} in dimensions higher than one, a detailed proof for the one-dimensional case may also be found in \cite{GveliDiss}. A more general situation is considered in \cite{DPZ} in an $L_p$-setting. The constant $c_l$ is increasing with respect to $T$. Also note that the compatibility condition for the initial value is natural as we consider classical solutions. \\

Based on the previous result we next prove an existence result for classical solutions to the nonlinear problem \eqref{1}, \eqref{2} by means of the contraction mapping principle. This naturally yields a global existence criterion that we shall exploit further in the subsequent section.  Let us point out that other methods and other solution spaces are possible as well, of course, see e.g. \cite{Con} (and the references therein) where $a$ in \eqref{1} is gradient-independent.

\begin{theorem}\label{ExLTheorem1} 
Let $\alpha\in (0,1)$ and 
\bqn\label{66a}
a\,,\, f\in C^{2}([0,T]\times[-\ell,\ell]\times\R\times\R) ,\qquad b(\cdot,\pm\ell,\cdot,\cdot)\,,\, g(\cdot,\pm\ell,\cdot,\cdot)\in C^{2}([0,T]\times \R \times \R)
\eqn
satisfy the uniform parabolicity condition  
\begin{equation} \label{upc}
a(t,x,z,p) > 0\,, \qquad \partial_4 b(t,\pm l,z,p) p + b(t,\pm l,z,p) \mp \partial_4 g(t,\pm l,z,p) > 0
\end{equation}
for $(t,x,z,p)\in [0,T]\times [-\ell,\ell]\times \R \times \R$. Let $u^0 \in C^{2+\alpha}([-\ell,\ell])$ satisfy the compatibility condition
\bqn\label{66}
\bsp
\mp b(0,x, u^0,u_x^0){u}_x^0 + g(0,x,u^0,u_x^0) = a(0,x,u^0,{u}_x^0){u}_{xx}^0+f(0,x,u^0,{u}_x^0) \quad \text{ at } \ x=\pm\ell.
\end{split}
\eqn
Then the problem \eqref{1}, \eqref{2} subject to $u(0,\cdot)=u^0$ admits a unique solution $u$ on a maximal interval of existence $[0,\tau_\infty) \subset [0,T]$ such that $u \in \mC^{1+\frac{\alpha}{2},2+\alpha}(\Om_\tau)$ for any $\tau < \tau_\infty$ and  either
\bqn\label{globex}
\lim_{\tau \nearrow \tau_\infty} |u|^{(1+\frac{\alpha}{2},2+\alpha)}_{\Om_\tau} = \infty %
\eqn
or $u \in \mC^{1+\frac{\alpha}{2},2+\alpha}(\Om_T)$.

\noindent Moreover, in the special case \eqref{2a} when $b$ and $g$ are gradient-independent and $b(\cdot,\pm\ell,\cdot)>0$, then either 
\[
\lim_{\tau \nearrow \tau_\infty}  \big( |u|_{\Om_\tau}+|u_x|_{\Om_\tau}+\la u _x \ra^{(\frac{\alpha}{2}, \alpha)}_{\Om_\tau} \big)   = \infty
\]
or $u \in \mC^{1+\frac{\alpha}{2},2+\alpha}(\Om_T)$.




\end{theorem}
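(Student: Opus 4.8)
The plan is to obtain the local solution by a contraction argument built on the linear theory of Proposition~\ref{ExLin1}, and then to reach the maximal interval of existence by a continuation/gluing argument that produces the blow-up alternative. I would set $\bar u(t,x):=u^0(x)$, which lies in $\calphazwei(\Omtau)$ for every $\tau\le T$, and work in the closed ball
\[
\mathbb{B}_{R,\tau}:=\big\{\,v\in\calphazwei(\Omtau):\ v(0,\cdot)=u^0,\ |v-\bar u|^{(1+\frac{\alpha}{2},2+\alpha)}_{\Omtau}\le R\,\big\}.
\]
For $v\in\mathbb{B}_{R,\tau}$ I freeze the interior data $a_v(t,x):=a(t,x,v,v_x)$, $f_v(t,x):=f(t,x,v,v_x)$ and, to respect the quasilinear boundary operator, \emph{linearize in the gradient variable}, setting (traces at $x=\pm\ell$)
\[
\beta_v^{\pm}:=\partial_4 b(t,\pm\ell,v,v_x)\,v_x+b(t,\pm\ell,v,v_x)\mp\partial_4 g(t,\pm\ell,v,v_x),
\]
\[
h_v^{\pm}:=g(t,\pm\ell,v,v_x)\pm\beta_v^{\pm}\,v_x\mp b(t,\pm\ell,v,v_x)\,v_x.
\]
This choice has three virtues. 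By the uniform parabolicity condition \eqref{upc} one has $\beta_v^{\pm}>0$, and since the full norm controls $|v-u^0|_{\Omtau}+|v_x-u^0_x|_{\Omtau}$ the arguments $(v,v_x)$ remain in a fixed bounded set once $R$ is chosen, so compactness gives $m^{-1}\le\beta_v^{\pm}\le m$ uniformly and Proposition~\ref{ExLin1} applies. Because $v(0,\cdot)=u^0$, a short computation shows the compatibility condition \eqref{141} for the linear data $(a_v,f_v,\beta_v^{\pm},h_v^{\pm})$ is \emph{equivalent} to the assumed \eqref{66}; and after cancellation a fixed point $u=v$ of the induced map satisfies the original equations \eqref{1}, \eqref{2}. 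Thus $\Phi:v\mapsto u$, with $u$ the solution furnished by Proposition~\ref{ExLin1}, is a well-defined self-map candidate whose fixed points are the sought solutions.

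Next I would show $\Phi$ contracts $\mathbb{B}_{R,\tau}$ into itself for $R$ fixed suitably and $\tau$ small. By Nemytskii/composition estimates in parabolic H\"older spaces the regularity \eqref{66a} yields $a_v,f_v\in\calpha(\Omtau)$ and $\beta_v^{\pm},h_v^{\pm}\in C^{\frac{1+\alpha}{2}}([0,\tau])$, the latter because the trace $v_x(\cdot,\pm\ell)$ already lies in $C^{\frac{1+\alpha}{2}}([0,\tau])$. For the self-mapping I would write $\Phi(v)-\bar u$ as the solution of a linear problem with vanishing initial data and estimate it via \eqref{ExLin2}, exploiting that parabolic H\"older norms of quantities vanishing at $t=0$ gain a factor $\tau^{\gamma}$, $\gamma>0$ (interpolation estimates from the Appendix). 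For the contraction I would subtract the problems for $u_i=\Phi(v_i)$; the difference solves a linear problem driven by the differences of the frozen data together with the commutator $(a_{v_1}-a_{v_2})\,u_{2,xx}$ and its boundary analogue, and the Lipschitz bounds from \eqref{66a} times the $\tau^{\gamma}$-smallness give a contraction factor $\tfrac12$ for $\tau$ small. Banach's theorem then produces a unique $u\in\calphazwei(\Omtau)$. The principal obstacle lies exactly here: the careful bookkeeping of these composition and difference estimates in the anisotropic spaces, and in particular securing the elevated time-regularity $C^{\frac{1+\alpha}{2}}$ of the boundary data, where the dynamical condition couples $u_t$ to $u_x$.

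With a unique local solution in hand, I would let $\tau_\infty$ be the supremum of existence times; uniqueness glues the local pieces to a solution on $[0,\tau_\infty)$. If $\tau_\infty<T$ and $\sup_{\tau<\tau_\infty}|u|^{(1+\frac{\alpha}{2},2+\alpha)}_{\Omtau}<\infty$, then $u(\tau,\cdot)$ is Cauchy in $C^{2+\alpha}([-\ell,\ell])$ as $\tau\nearrow\tau_\infty$, producing an initial value $u(\tau_\infty,\cdot)\in C^{2+\alpha}$ which, since \eqref{2} holds up to $t=\tau_\infty$, automatically fulfils \eqref{66}; restarting the local construction at $\tau_\infty$ and gluing would contradict maximality. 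Hence either \eqref{globex} holds, or $\tau_\infty=T$ and the uniform bound places $u\in\calphazwei(\OmT)$.

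Finally, for the gradient-independent case \eqref{2a} with $b(\cdot,\pm\ell,\cdot)>0$ I would sharpen the alternative by a bootstrap, exploiting that now $\beta_v^{\pm}=b(t,\pm\ell,v)$ and all data depend on $v$ only through $v$ and $v_x$. Assume $|u|_{\Omtau}+|u_x|_{\Omtau}+\la u_x\ra^{(\frac{\alpha}{2},\alpha)}_{\Omtau}$ stays bounded. The boundary condition gives $u_t(\cdot,\pm\ell)=\mp b(\cdot,u)u_x+g(\cdot,u)$ bounded, so $u(\cdot,\pm\ell)$ is Lipschitz in time; combined with the spatial integration of $\la u_x\ra^{(\frac{\alpha}{2},\alpha)}_{\Omtau}$ this yields $u\in\calpha(\Omtau)$ with a bound in terms of those quantities, whence $a(\cdot,u,u_x),f(\cdot,u,u_x)\in\calpha(\Omtau)$ and $b(\cdot,\pm\ell,u),g(\cdot,\pm\ell,u)\in C^{\frac{1+\alpha}{2}}([0,\tau])$ are likewise bounded. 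Viewing $u$ as the solution of the associated linear problem and invoking \eqref{ExLin2} then bounds $|u|^{(1+\frac{\alpha}{2},2+\alpha)}_{\Omtau}$, so blow-up of the full norm is equivalent to blow-up of $|u|_{\Omtau}+|u_x|_{\Omtau}+\la u_x\ra^{(\frac{\alpha}{2},\alpha)}_{\Omtau}$, which is the asserted refinement.
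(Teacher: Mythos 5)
Your proposal is correct and rests on the same two pillars as the paper's proof: local existence by a contraction argument built on the linear maximal regularity of Proposition~\ref{ExLin1}, and, for the gradient-independent case \eqref{2a}, the identical bootstrap (bound $u_t(\cdot,\pm\ell)$ from the boundary condition, recover the time-H\"older norm of $u$ by integrating differences of $u_x$, then feed the composed coefficients $\wt a,\wt f,\wt b,\wt g$ back into the linear estimate \eqref{ExLin2}). The one genuine difference is the fixed-point decomposition: the paper freezes the linear operator once and for all at the \emph{initial datum}, setting $a_0(t,x)=a(t,x,u^0,u^0_x)$ and $h_0=\partial_4 h(\cdot,\pm\ell,u^0,u^0_x)$ with $h=bp\mp g$, so that a single isomorphism $L_0$ with the $\tau$-uniform inverse bound \eqref{L0} carries all iterates and the quasilinear structure is pushed entirely into the right-hand sides $F(u),H(u)$; you instead freeze the coefficients at the \emph{current iterate} $v$, so your linear operator varies along the iteration. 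Your scheme works (and your verification that the linear compatibility condition \eqref{141} reduces to \eqref{66} is right, since the $\pm\beta_v^{\pm}u^0_x$ terms cancel at $t=0$), but it costs two checks the paper's version avoids: the constant $c_l$ of Proposition~\ref{ExLin1} depends not only on $m$ but also on $|a_v|^{(\frac{\alpha}{2},\alpha)}_{\Om_\tau}$ and $|\beta_v^{\pm}|^{(\frac{1+\alpha}{2})}_{[0,\tau]}$, so uniformity of these H\"older norms over the ball and over $\tau\le T$ must be recorded, and the contraction compares solutions of two \emph{different} linear problems, which forces the commutator terms $(a_{v_1}-a_{v_2})u_{2,xx}$ and their boundary analogues that you do anticipate. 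Two small repairs: first, $\Vert\Phi(v)-\bar u\Vert_{\mathbb{E}_1(\tau)}$ does \emph{not} shrink as $\tau\to0$, because $\partial_t\Phi(v)(0,\cdot)=a_0u^0_{xx}+f(0,\cdot,u^0,u^0_x)\not\equiv 0$ enters the norm at full strength; the $\tau^{\gamma}$ gain is available only for differences of iterates, so invariance of the ball should be obtained as in the paper, fixing $R$ first as in \eqref{M1} and using $\Vert\Phi(v)-\bar u\Vert\le\Vert\Phi(v)-\Phi(\bar u)\Vert+\Vert\Phi(\bar u)-\bar u\Vert$. Second, in the continuation step $u(\tau,\cdot)$ is Cauchy only in $C^{2}$ (or $C^{2+\alpha'}$ with $\alpha'<\alpha$); the limit then lies in $C^{2+\alpha}$ by the uniform bound, which suffices to restart. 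These are presentational fixes, not gaps, and the blow-up alternatives follow in both versions exactly as you describe.
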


\begin{proof} To prove existence of a local solution we formulate the problem as a fixed point equation to which we may apply the contraction mapping principle. To do so, we set 
\begin{align*}
h(t,\pm \ell ,z,p) & := b(t,\pm \ell,z,p) p \mp g (t,\pm \ell,z,p)\ ,\quad && (t,z,p)\in [0,T]\times \R\times \R\ ,\\
 h_0(t,\pm \ell) & := \partial_4 h(t,\pm \ell,u^0(\pm \ell),u^0_x(\pm \ell))\ ,&& t\in [0,T]\ ,\\
a_0(t,x) &:= a(t,x,u^0(x),u^0_x(x))\ ,& &(t,x)\in [0,T]\times [-\ell,\ell]\ .  
\end{align*}
Observe that $h_0(t,\pm \ell) > 0$ thanks to (\ref{upc}). Given $\tau \in (0,T]$ define the Banach spaces
\[
\begin{split}
\mathbb{E}_0(\tau) := \big\{ (v, w, z)\,;\, v\in \mC^{\frac{\alpha}{2},\alpha}(\Om_\tau),\, & w(\cdot,\pm\ell)\in C^{\frac{1+\alpha}{2}}([0,\tau]) ,\, z\in C^{2+\alpha}([-\ell,\ell]),\\
& \; \big(v + a_0 z_{xx}\big)(0,\pm \ell) = \big(w \mp h_0 z_{x}\big)(0,\pm \ell) \big\}
\end{split}
\]
and
$$
\mathbb{E}_1(\tau) := \big\{ u \in \mC^{1+\frac{\alpha}{2},2+\alpha}(\Om_\tau); \; u_t(\cdot,\pm \ell) \in C^{\frac{1+\alpha}{2}}([0,\tau]) \big\}\ ,
$$
equipped with the norms
$$
\Vert(v,w,z)\Vert_{\mathbb{E}_0(\tau)}:=|v|^{({\frac{\alpha}{2},\alpha})}_{\Om_\tau} +|w(\cdot,\ell)|^{({\frac{1+\alpha}{2}})}_{[0,\tau]}+|w(\cdot,-\ell)|^{({\frac{1+\alpha}{2}})}_{[0,\tau]} + |z|^{(2+\alpha)}_{[-\ell,\ell]} 
$$
and, respectively, 
$$
\Vert u \Vert_{\mathbb{E}_1(\tau)} := |u|^{({1+\frac{\alpha}{2},2+\alpha})}_{\Om_\tau} + |u_t(\cdot, \ell)|^{(\frac{1+\alpha}{2})}_{[0,\tau]}+|u_t(\cdot,- \ell)|^{(\frac{1+\alpha}{2})}_{[0,\tau]}\ .
$$
Then, introducing the operator $L_0$ by setting
$$
(L_0 u)(t,x) := \big( u_t (t,x) - a_0(t,x) u_{xx}(t,x), u_t(t,\pm\ell) \pm h_0(t,\pm\ell)u_x(t,\pm\ell), u(0,x)\big)\ ,\quad (t,x)\in \overline\Omega_\tau\ ,
$$
for $ u\in \mathbb{E}_1(\tau)$, we derive from \eqref{upc} and Proposition \ref{ExLin1} that $L_0 \in \mathcal{L}\big(\mathbb{E}_1(\tau),\mathbb{E}_0(\tau)\big)$ is a topological isomorphism for any $\tau \in (0,T]$ with
\begin{equation}\label{L0}
\sup_{\tau \in (0,T]} \Vert L_0^{-1} \Vert_{\mathcal{L}(\mathbb{E}_0(\tau),\mathbb{E}_1(\tau))} < \infty.
\end{equation}
In order to solve problem \eqref{1}, \eqref{2}, we shall seek for a fixed point of the mapping $\Phi$ given by
\[
\Phi(u):=L_0^{-1}\big(F(u), H(u), u^0\big)\,, 
\]
where
\begin{align*}
F(u) & :=  (a(\cdot,\cdot,u,u_x) - a_0)  u_ {xx} + f(\cdot,\cdot,u,u_x)\,, \\
H(u) (\cdot,\pm\ell) & :=  \pm h_0(\cdot,\pm\ell)   u_x(\cdot,\pm\ell)  \mp h (\cdot,\pm\ell,u(\cdot,\pm\ell),u_x(\cdot,\pm\ell)),
\end{align*}
for $u\in \mathbb{E}_1(\tau)$. For this we show that $\Phi$ is a contraction on the set
\[
\mathcal{V}(\tau) := \{ u \in \mathbb{E}_1(\tau);\; u(0) = u^0\,,\,\Vert u \Vert_{\mathbb{E}_1(\tau)} \leq M \}
\]
when $\tau$ is chosen sufficiently small, where
\begin{equation}\label{M1}
M > 2 \Vert \Phi(u_0) \Vert_{\mathbb{E}_1(T)} + \Vert u_0 \Vert_{\mathbb{E}_1(T)}\,.
\end{equation}
First observe that $\big(F(u),H(u),u(0,\cdot)\big) \in \mathbb{E}_0(\tau)$ for $u \in \mathcal{V}(\tau)$ due to \eqref{66} and $u(0,\cdot)=u^0$.
We then claim that, given $\varepsilon > 0$, there is $\tau^* \in (0,T]$ such that 
\begin{equation}\label{u0}
\big\Vert \big(F(u),H(u),u^0)  - (F(v),H(v),u^0\big)  \big\Vert_{\mathbb{E}_0(\tau)} \leq \varepsilon \Vert u - v \Vert_{\mathbb{E}_1(\tau)} 
\end{equation}
for all $u,v \in \mathcal{V}(\tau)$ and all $0 < \tau < \tau^*$. Indeed, for $u,v \in \mathcal{V}(\tau)$ we have 
\[
H(u) - H(v) =\int_0^1  \big[\pm h_0  \mp \partial_4 h \big(\cdot,\cdot, u, \sigma u_x + (1-\sigma)v_x\big)\big] (u_x - v_x)\; \rd\sigma 
\]
and thus 
\[
\begin{split}
\big| \big(H(u) - H(v)\big)(\cdot,\pm\ell) \big|^{(\frac{1+\alpha}{2})}_{[0,\tau]} & \leq  \sup_{\sigma \in (0,1)} \big| \big( h_0 - \partial_4 h (\cdot,\cdot, u, \sigma u_x + (1-\sigma)v_x)\big(\cdot,\pm\ell) \big|^{(\frac{1+\alpha}{2})}_{[0,\tau]}\\
&  \quad\qquad \times \big| (u_x - v_x)(\cdot,\pm\ell) \big|_{[0,\tau]} \\
& \quad + \sup_{\sigma \in (0,1)} \big| \big( h_0 - \partial_4 h (\cdot,\cdot, u, \sigma u_x + (1-\sigma)v_x)\big)(\cdot,\pm\ell) \big|_{[0,\tau]} \\
&  \quad\qquad \times \big| (u_x - v_x)(\cdot,\pm\ell) \big|^{(\frac{1+\alpha}{2})}_{[0,\tau]}.
\end{split}
\]
Note that
\[
\big|u_x(\cdot,\pm\ell) - v_x(\cdot,\pm\ell) \big|_{[0,\tau]}  \leq  \tau^{{\frac{1+\alpha}{2}}} \, \big| u_x(\cdot,\pm\ell) - v_x (\cdot,\pm\ell)\big|^{(\frac{1+\alpha}{2})}_{[0,\tau]}  \leq \tau^{{\frac{1+\alpha}{2}}} \, \Vert u - v \Vert_{\mathbb{E}_1(\tau)}
\]
and, since $h_0 - \partial_4 h (\cdot,\cdot, u, \sigma u_x + (1-\sigma)v_x) = 0$ at $(t,x)=(0,\pm\ell)$, we may estimate 
\[
\begin{split}
\big| \big( h_0 - \partial_4  h (\cdot,\cdot, u,& \sigma u_x + (1-\sigma)v_x)\big)(\cdot,\pm\ell)\big|_{[0,\tau]} \\
& \leq  \tau^{{\frac{1+\alpha}{2}}} \,
 \big| \big(h_0 - \partial_4 h (\cdot,\cdot, u, \sigma u_x + (1-\sigma)v_x)\big)(\cdot,\pm\ell) \big|^{(\frac{1+\alpha}{2})}_{[0,\tau]}  \\ 
& \leq  c_1(M)\, \tau^{{\frac{1+\alpha}{2}}} 
\end{split}
\]
with a constant $c_1(M)$ depending on second order derivatives of $h$ (here and in the following we ignore possible dependence of constants on the number $\alpha$). Consequently,
\begin{equation}\label{u1}
\big| \big(H(u) - H(v)\big)(\cdot,\pm\ell) \big|^{(\frac{1+\alpha}{2})}_{[0,\tau]} \le c(M) \, \tau^{{\frac{1+\alpha}{2}}}\,\, \Vert u - v \Vert_{\mathbb{E}_1(\tau)}\ .
\end{equation}
Writing 
\[
\begin{split}
F(u) - F(v)  = \ &  f(\cdot,\cdot,u,u_x) - f(\cdot,\cdot,v,v_x) + \big(a(\cdot,\cdot,u,u_x) - a_0\big) (u_ {xx} - v_ {xx}) \\
 & + \big(a(\cdot,\cdot,u,u_x) - a(\cdot,\cdot,v,v_x)\big) v_ {xx}
\end{split}
\]
and observing that $a(\cdot,\cdot,u,u_x) - a_0$, $a(\cdot,\cdot,u,u_x) - a(\cdot,\cdot,v,v_x)$, and $f(\cdot,\cdot,u,u_x) - f(\cdot,\cdot,v,v_x)$ are of lower order with $a(\cdot,\cdot,u,u_x) - a_0 = 0$ at $t=0$, standard interpolation inequalities for parabolic H\"older spaces (see \cite{Kry} and Proposition~\ref{FR1}) together with similar arguments as above imply that
\[
\begin{split}
\big| (a(\cdot,\cdot,u,u_x) - a_0)& \,(u_ {xx} - v_ {xx}) \big|^{({\frac{\alpha}{2},\alpha})}_{\Om_\tau} \\
 & \leq   \tau^{1/2} \, \sup_{x \in (-l,l)} \big| a(\cdot,x,u,u_x) - a_0(\cdot,x) \big|^{(\frac{1+\alpha}{2})}_{[0,\tau]}\, \Vert u - v \Vert_{\mathbb{E}_1(\tau)}\\
& \qquad +  \; c(l) \, \sup_{t \in (0,\tau)} \Vert a(t,\cdot,u,u_x) - a_0(t,\cdot) \Vert_{C^{1}([-l,l])} \,  \Vert u - v \Vert_{\mathbb{E}_1(\tau)} \\
& \leq  c(M,l) \, \big(\tau^{1/2} + \tau^{\alpha/2}\big)  \, \Vert u - v \Vert_{\mathbb{E}_1(\tau)}
\end{split}
\]
as well as 
\[
\big| \big(a(\cdot,\cdot,u,u_x) - a(\cdot,\cdot,v,v_x)\big) \, v_ {xx} \big|^{({\frac{\alpha}{2},\alpha})}_{\Om_\tau} \leq c(M) \, \big( \tau^{\frac{1}{2+\alpha}} + \tau^{\frac{1 + \alpha}{2}}\big) \, \Vert u - v \Vert_{\mathbb{E}_1(\tau)}
\]
and 
\[
\big| f(\cdot,\cdot,u,u_x) - f(\cdot,\cdot,v,v_x) \big|^{({\frac{\alpha}{2},\alpha})}_{\Om_\tau} \leq c(M) \, \big(\tau^{\frac{1}{2+\alpha}} + \tau^{\frac{1 + \alpha}{2}}\big) \, \Vert u - v \Vert_{\mathbb{E}_1(\tau)}\, , 
\]
with constants depending on derivatives of $a$ and $f$ up to second order.
These estimates combined with~\eqref{u1} yield \eqref{u0}.

Next note that \eqref{L0} and \eqref{u0} imply
\begin{equation}\label{u4}
\Vert \Phi(u) - \Phi(v) \Vert_{\mathbb{E}_1(\tau)} \leq \frac{1}{2} \, \Vert u-v \Vert_{\mathbb{E}_1(\tau)}\ ,\qquad u,v \in \mathcal{V}(\tau)\,,
\end{equation}
for $\tau > 0$ small enough.
Also note that \eqref{u4} and \eqref{M1} entail that 
\[
\Vert \Phi(u) \Vert_{\mathbb{E}_1(\tau)} \leq \Vert \Phi(u_0) \Vert_{\mathbb{E}_1(T)} + \frac{1}{2}\big(\Vert u \Vert_{\mathbb{E}_1(\tau)} + \Vert u_0 \Vert_{\mathbb{E}_1(T)}\big) < \frac{M}{2}+\frac{M}{2}= M
\]
for $u\in\mathcal{V}(\tau)$, that is, $\Phi$ maps $\mathcal{V}(\tau)$ into itself. 
Consequently, the contraction mapping principle yields a unique fixed point $u\in\mathcal{V}(\tau)$ for the mapping~$\Phi$ which solves problem \eqref{1}, \eqref{2} subject to $u(0,\cdot)=u^0$. Clearly, this local solution can be extended to a solution $u$ on a maximal interval of existence $[0,\tau_\infty) \subset [0,T]$ such that $u \in \mC^{1+\frac{\alpha}{2},2+\alpha}(\Om_\tau)$ for any $\tau < \tau_\infty$ and either
\[
\lim_{\tau \nearrow \tau_\infty} |u|^{(1+\frac{\alpha}{2},2+\alpha)}_{\Om_\tau} = \infty %
\]
or $u \in \mC^{1+\frac{\alpha}{2},2+\alpha}(\Om_T)$.

Finally, consider the special case \eqref{2a} when $b$ and $g$ are gradient-independent with $b(\cdot,\pm\ell,\cdot)>0$.
Suppose that there are a constant $R>0$ and a sequence $\tau_i \nearrow \tau_\infty$ such that 
\bqnn
\sup_{i\in\N} \big(|u|_{\Om_{\tau_i}}+|u_x|_{\Om_{\tau_i}}+\la u _x \ra^{(\frac{\alpha}{2}, \alpha)}_{\Om_{\tau_i}} \big)\leq R\,.
\eqnn
This clearly implies that in fact 
\begin{equation} \label{80a}
\sup_{\tau < \tau_\infty} \big( |u|_{\Om_\tau} + |u_x|_{\Om_\tau} + \la u _x \ra^{(\frac{\alpha}{2}, \alpha)}_{\Om_\tau} \big)\leq R\,.
\end{equation}
It then follows from equation (\ref{2a}) that 
\begin{equation} \label{80b}
\sup_{\tau < \tau_\infty} |u_t(\cdot,\pm \ell)|_{[0,\tau]} \leq c(R) \,.
\end{equation}
Using this and the fact that  
$$
|u(t,y) - u(s,y)| \leq  \int_{-\ell}^y |u_x(t,z) - u_x(s,z)| \; \rd z + |u(t,-\ell) - u(s,-\ell)|
$$
for $(t,s,y) \in [0,\tau_\infty)^2 \times [-\ell,\ell]$,
we find 
\begin{equation} \label{80d}
\sup_{\tau < \tau_\infty} \la u \ra^{(\frac{\alpha}{2}, \alpha)}_{\Om_\tau} \leq c(R,\ell)\,  (1 + T^{1-\alpha/2}) \,.
\end{equation}
Now, \eqref{80a}, \eqref{80b}, and \eqref{80d} imply 
\begin{equation} \label{80e}
\max\Big\{|\wt{a}|^{(\frac{\alpha}{2}, \alpha)}_{\Om_\tau}, \; |\wt{f}|^{(\frac{\alpha}{2}, \alpha)}_{\Om_\tau}, \; |\wt{b}(\cdot,\pm\ell)|^{(\frac{1+\alpha}{2})}_{[0,\tau]},\; |\wt{g}(\cdot,\pm\ell)|^{(\frac{1+\alpha}{2})}_{[0,\tau]}\Big\} \leq c(R,\ell,T)
\end{equation}
for each $\tau < \tau_\infty$, where $\wt{a}(t,x) := a(t,x,u(t,x), u_x(t,x))$ and $\wt{f}$, $\wt{b}$, $\wt{g}$ are defined analogously. 

As $a$ and $b$ are strictly positive on $[0,T] \times [-\ell,\ell] \times [-R, R]^2$, respectively, on $[0,T] \times \{\pm\ell\} \times [-R, R]$, Proposition~\ref{ExLin1} ensures the existence of a constant $c=c\big(R, \ell, T, |u^0|^{(2+\alpha)}_\Om\big)$ such that
\bqnn\label{81}
|u|_{\Om_\tau}^{(1+\frac{\alpha}{2},2+\alpha)}\leq c\,, \quad \tau < \tau_\infty\,,
\eqnn
hence $u \in \mC^{1+\frac{\alpha}{2},2+\alpha}(\Om_T)$ by our previous findings. This proves the theorem.
\end{proof}

Actually, a closer look at the proof shows that the global existence criterion~\eqref{globex} for the general (i.e. gradient-dependent) case can be weakened. Indeed, it suffices to control
$$
|u|_{\Omega_\tau}^{(\frac{1+\alpha}{2},\alpha)} + \big|u(\cdot,\pm\ell)\big|^{(\frac{1+\alpha}{2})}_{[0,\tau]}
$$
uniformly in $\tau<\tau_\infty$ in this case. Moreover, since the maximal regularity result stated in Theorem~\ref{ExLin1} holds also true in higher space dimensions (see \cite[Theorem 1.1]{Biz}), one easily verifies that Theorem~\ref{ExLTheorem1} is true in this case, too. 


\section{A Priori Estimates and Global Solutions} \label{sec:globsol}

Theorem~\ref{ExLTheorem1} reduces the question of global existence  to \eqref{1} with gradient-independent boundary conditions \eqref{2a} to finding {\it a priori} estimates for 
\bqn\label{uo}
|u|_{\Om_\tau}+|u_x|_{\Om_\tau}+\la u _x \ra^{(\frac{\alpha}{2}, \alpha)}_{\Om_\tau} 
\eqn
independent of $\tau<\tau_\infty$ for the solution $u$ 
on the maximal interval of existence $[0,\tau_\infty) \subset [0,T]$. The aim of this section is to further reduce this condition. More precisely, we shall show $L_\infty$- and H\"older-bounds on the gradient $u_x$ solely based on bounds on $|u|_{\Om_\tau}=\sup_{(t,x)\in\Om_\tau} |u(t,x)|$, that is, we show that a bound on the first term of \eqref{uo} implies bounds on the second and third terms. It is worthwhile to point out that we can obtain such estimates  even for equation \eqref{1} combined with the gradient-dependent boundary condition~\eqref{2}. While H\"older-estimates on the gradient, i.e. estimates on $\la u _x \ra^{(\frac{\alpha}{2}, \alpha)}_{\Om_\tau}$, are rather easy to obtain from the existing theory (see Subsection~\ref{Sec1}), more effort has to be invested in Subsection~\ref{Abschnitt_4} to derive estimates on $|u_x|_{\Om_\tau}$.

\subsection{{\it A priori} estimates on the gradient}\label{Abschnitt_4}

We shall find {\it a priori} $L_\infty$-bounds on the gradient $u_x$ of a solution to \eqref{1}, \eqref{2} presupposing a bound on its $L_\infty$-norm. The proof is in the spirit of \cite{Ter} and uses Kruzhkov's  idea of introducing a new variable \cite{Kru1, Kru2}.  In the next subsection we derive gradient estimates when imposing the Bernstein-Nagumo condition. In  Subsection \ref{subsection_2} we then indicate how to weaken this condition for a right hand side $f+f_1$ in \eqref{1} where only $f$ satisfies the Bernstein-Nagumo condition and $f_1$ is allowed to be unbounded in the gradient variable. We also consider the case of mixed boundary conditions in Subsection~\ref{subsection_3}.\\

In the following, a {\it classical solution} to \eqref{1}, \eqref{2} [resp. to \eqref{1}, \eqref{2a}] on $\Om_\tau$ with $\tau\le T$ is a function $u\in C^{1,2}(\Om_\tau)\cap C(\overline{\Omega}_\tau)$ with derivatives $u_t(\cdot,\pm\ell)$ and $u_x(\cdot,\pm\ell)$ being defined on $(0,\tau)$ and satisfying  \eqref{1}, \eqref{2} [resp. \eqref{1}, \eqref{2a}] pointwise in~$\Om_\tau$. Note that the existence and uniqueness of such a solution (with higher regularity) is guaranteed by Theorem~\ref{ExLTheorem1} when imposing the assumptions stated there.

Throughout we assume continuity of the data, that is,
\bqnn
a, f\in C([0,T]\times[-\ell,\ell]\times\R\times\R) ,\qquad b(\cdot,\pm\ell,\cdot,\cdot), g(\cdot,\pm\ell,\cdot,\cdot)\in C([0,T]\times \R\times \R)
\eqnn
and the parabolicity condition $a>0$ and $b(\cdot,\pm\ell,\cdot,\cdot)>0$. Stronger assumptions will be indicated explicitly.

\subsubsection{\textbf{Gradient estimates under the Berstein-Nagumo condition}}\label{subsection_1}

The next theorem provides {\it a priori} estimates on the gradient for any  classical solution $u$ to \eqref{1}, \eqref{2} on $\Om_\tau$ with $\tau\le T$ in dependence on its $L_\infty$-bound.

\begin{theorem}\label{A1}
Let $u^0$ be  Lipschitz continuous on $[-\ell,\ell]$, that is,
\begin{equation}\label{10}
|u^0(x)-u^0(y)|\le K|x-y|, \quad x,y\in [-\ell,\ell] .
\end{equation}
Let $M>0$ and suppose that there are $q_0\geq K$ and $\psi\in C^1\big([0,\infty), [1,\infty)\big)$ with 
\begin{equation}\label{9}
\int\limits_{q_0}^\infty\frac{\rho\,\rd\rho}{\psi(\rho)}> 2M,
\end{equation}
such that
\begin{equation}\label{6}
|f(t,x,z,p)|\le a(t,x,z,p)\psi(|p|), \quad (t,x,z,p)\in\overline{\Omega}_T\times[-M,M]\times\R
\end{equation}
and
\begin{align}
\pm g(t, \ell, z, \pm p)    & \le b(t,  \ell, z, \pm p)p ,\qquad  
\mp g(t, - \ell, z, \pm p)  \le b(t, - \ell, z, \pm p)p , \label{9bNEU}
\end{align}
for $(t,z)\in (0,T]\times [-M,M]$ and $p\ge q_0$. 
Then there exists a constant
$M_1:=M_1(M,\psi,K)$ such that if $u$ is any  classical solution to \eqref{1}, \eqref{2} on $\Om_\tau$ with $\tau\le T$ subject to $u(0,\cdot)=u^0$ satisfying $|u|_{\Om_\tau}\le M$, then  $|u_x|_{\Om_\tau}\leq M_1$.
\end{theorem}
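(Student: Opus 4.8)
The plan is to establish the two one-sided bounds $u_x\le M_1$ and $u_x\ge -M_1$ separately by a doubling-of-variables comparison in the spirit of the cited Kruzhkov/doubling technique; by an obvious symmetry I only describe the upper bound. On the closed triangle $\{(t,x,y)\,;\,0\le t\le\tau,\ -\ell\le y\le x\le\ell\}$ I consider
\[
W(t,x,y):=u(t,x)-u(t,y)-G(x-y),
\]
where $G$ is a barrier built from $\psi$ as follows. Using \eqref{9} and $\psi\ge 1$, I pick $q_1>q_0$ with $\int_{q_0}^{q_1}\frac{\rho\,\rd\rho}{\psi(\rho)}=2M$ (possible since the full integral strictly exceeds $2M$) and let $G$ be the increasing, strictly concave solution of $G''=-\psi(G')$ on $[0,s^*]$ with $G(0)=0$, $G'(0)=q_1$, continued by the constant value $2M$ for $s\ge s^*$, where $s^*$ is the first $s$ with $G'(s)=q_0$. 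The decisive point, which produces the sharp weight $\rho\,\rd\rho/\psi$ rather than the cruder $\rd\rho/\psi$, is that the comparison is only ever needed on $\{G<2M\}$: at a would-be positive maximum of $W$ one has $G(x_0-y_0)<u(t_0,x_0)-u(t_0,y_0)\le 2M$ since $|u|\le M$. Reparametrising the ODE by its own value shows precisely that, as $G$ rises from $0$ to $2M$, the slope $G'$ falls from $q_1$ to $q_0$, consuming exactly the budget $\int_{q_0}^{q_1}\rho\,\psi(\rho)^{-1}\rd\rho=2M$. The gradient bound is then $M_1=q_1=G'(0)$, read off by letting $y\uparrow x$ in $W\le 0$.

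At a positive maximum $(t_0,x_0,y_0)$ with $0<t_0\le\tau$ and $-\ell<y_0<x_0<\ell$, write $s_0:=x_0-y_0$. The first-order conditions give $u_x(t_0,x_0)=u_x(t_0,y_0)=G'(s_0)=:p_0\ge q_0>0$, and the second-order conditions give $u_{xx}(t_0,x_0)\le G''(s_0)$ and $u_{xx}(t_0,y_0)\ge -G''(s_0)$. Denoting by $a_i,f_i$ the values of $a,f$ at the two points (both carrying the same slope $p_0$), I insert \eqref{1} into $W_t(t_0,x_0,y_0)\ge 0$, use $a_i>0$ together with the second-order bounds to obtain $a_1u_{xx}(t_0,x_0)-a_2u_{xx}(t_0,y_0)\le (a_1+a_2)G''(s_0)$, and bound the reaction difference by \eqref{6} as $f_1-f_2\le (a_1+a_2)\psi(p_0)$. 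This yields
\[
0\le (a_1+a_2)\big(G''(s_0)+\psi(p_0)\big),
\]
which is impossible once $G$ is a strict supersolution $G''+\psi(G')<0$; strictness is arranged by a harmless perturbation of the construction (replacing $\psi$ by $\psi+\e$ and re-solving for $q_1$, which is admissible for small $\e$ by \eqref{9}), or equivalently by subtracting $\e t$ from $W$ and letting $\e\downarrow 0$.

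The genuinely new difficulty is a maximum on the lateral boundary, say $x_0=\ell$ (the case $y_0=-\ell$ and the corner being analogous), where the interior equation at $x_0$ is unavailable and must be replaced by the dynamical condition \eqref{2}. Testing $W$ against inward variations in $x$ forces $u_x(t_0,\ell)\ge G'(s_0)\ge q_0>0$, so the sign hypothesis \eqref{9bNEU} applies with $z=u(t_0,\ell)\in[-M,M]$ and $p=u_x(t_0,\ell)\ge q_0$, giving $g(t_0,\ell,\cdot,\cdot)\le b(t_0,\ell,\cdot,\cdot)\,u_x(t_0,\ell)$; inserting this into \eqref{2} yields $u_t(t_0,\ell)\le 0$. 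Since $y_0$ is interior, the argument of the previous paragraph together with $u_{xx}(t_0,y_0)\ge -G''(s_0)=\psi(p_0)$ and \eqref{6} gives $u_t(t_0,y_0)\ge 0$, whence $W_t=u_t(t_0,\ell)-u_t(t_0,y_0)\le 0$, contradicting $W_t\ge 0$ at the maximum (strictly, after the $\e$-perturbation). I expect this coupling of the dynamical boundary condition with the one-sided sign condition \eqref{9bNEU} to be the main obstacle, as it is exactly here that the sign of $g$ relative to $b\,p$ must be exploited; it is also the reason \eqref{9bNEU} is demanded for both signs of $p$, the opposite sign being what drives the lower gradient bound.

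Finally I rule out a positive maximum at $t_0=0$: there $W(0,x,y)=u^0(x)-u^0(y)-G(x-y)$, and since $u^0$ is $K$-Lipschitz by \eqref{10} with $q_0\ge K$ while $|u^0|\le M$, one has $u^0(x)-u^0(y)\le\min\{K(x-y),2M\}\le G(x-y)$, because $G'\ge q_0\ge K$ on $[0,s^*]$ and $G\equiv 2M$ beyond. Hence $W\le 0$ on the whole triangle, and letting $y\uparrow x$ gives $u_x(t,x)\le G'(0)=q_1$ at every point, boundary points included. The symmetric argument applied to $u(t,y)-u(t,x)-G(x-y)$ (using the opposite signs in \eqref{9bNEU}) gives $u_x\ge -q_1$, so $|u_x|_{\Om_\tau}\le M_1:=q_1$, a constant determined only by $M$, $\psi$ and $K$ through $q_0\ge K$ and the identity $\int_{q_0}^{q_1}\rho\,\psi(\rho)^{-1}\rd\rho=2M$, and in particular independent of $\tau\le T$, as claimed.
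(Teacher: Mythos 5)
Your proposal is correct and follows essentially the same route as the paper's proof: the identical doubling-of-variables scheme with the barrier solving $h''+\psi(h')=0$, $h(0)=0$, slope decreasing from $q_1$ to $q_0$ with total rise $2M$ (your constant extension of $G$ past $s^*$ replaces the paper's restriction to the strip $\{0<x-y<\kappa_0\}$ with its boundary piece $S_4$, and your face and corner analysis via \eqref{9bNEU} matches the paper's treatment of $S_1$, $S_2$ and of the point $(\ell,-\ell)$ in its case $\kappa_0\ge 2\ell$). One caution: your two strictness devices are not interchangeable at the corner $(t_0,\ell,-\ell)$, where both endpoints carry the dynamical condition and $\psi$ never enters the computation, so replacing $\psi$ by $\psi+\varepsilon$ yields no contradiction there; you must use the $\varepsilon t$ subtraction, which is exactly the role played by the zeroth-order term $-\widetilde{w}$ produced by the paper's $e^{-t}$ weighting.
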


\begin{proof} We adapt the proof of \cite{Ter} to the case of the dynamic boundary condition \eqref{2}. Due to \eqref{9} there is $q_1>q_0$ with
\begin{equation}\label{265}
\int\limits_{q_0}^{q_1}\frac{\rho\,\rd\rho}{\psi(\rho)}=2M.
\end{equation}
Let $ \kappa$ be defined by
\bqnn
\kappa(\xi):=\int_{\xi}^{q_1} \frac{\rd \rho}{\psi (\rho)}, \quad \xi\in [q_0,q_1].
\eqnn
Since $\psi$ is positive,  $\kappa$ is strictly decreasing on $[q_0,q_1]$ with $\kappa(q_1)=0$.
Thus, its inverse $q:=\kappa^{-1}$ is decreasing on the interval $[0,\kappa_0]$, where $\kappa_0:=\kappa(q_0)$, with $q(0)=q_1$ and $q(\kappa_0)=q_0$.
Define
\bqnn
h(\xi):=\int\limits_{q(\xi)}^{q_1}\frac{\rho\,\rd\rho}{\psi(\rho)}\,,\quad \xi\in [0,\kappa_0]\, .
\eqnn
Noticing that
\bqn\label{pp}
\begin{split}
            h'(\xi) =q(\xi)\geq q_0\,,\qquad
						h''(\xi)=-\psi(q(\xi))\,,
\end{split}
\eqn
we see that the function $h$ solves 
\begin{equation}\label{11}
\begin{split}
h''(\xi)+\psi(|h'(\xi)|)&=0,\quad \,\,\xi\in (0,\kappa_0),\\
h(\kappa_0)=2M,&\quad 
h(0)   =0.
\end{split}
\end{equation}
Since $h'\geq q_0\geq K$ and $h(0)=0$, there is for $x,y\in [-\ell,\ell]$ with $|x-y|\leq \kappa_0$ some $\xi\in [0,|x-y|]$ such that
$$
h(|x-y|)=h'(\xi)|x-y|\geq K|x-y|.
$$
Hence, by \eqref{10}, we have
\begin{equation}\label{275}
|u^0(x)-u^0(y)|\leq h(|x-y|), \quad x,y\in [-\ell,\ell], \;|x-y|\leq \kappa_0.
\end{equation}
Let us define the sets
\begin{equation}\label{13}
\begin{split}
S_1& :=\{(t,x,y)\in\overline{P}:\,-\ell< x < -\ell+\kappa_0, \; y=-\ell, \; t\in (0,\tau]\},\\
S_2& :=\{(t,x,y)\in\overline{P}:\,x=\ell, \; \ell-\kappa_0 < y<\ell, \; t\in (0,\tau]\},\\
S_3& :=\{(t,x,y)\in\overline{P}:\,x=y, \; t\in (0,\tau]\},\\
S_4& :=\{(t,x,y)\in\overline{P}:\,x-y=\kappa_0, \; t\in (0,\tau]\},
\end{split}
\end{equation}
where
$$
P  :=\big\{(x,y)\in (-\ell,\ell)^2:\,0<x-y<\kappa_0 \big\},\qquad
P_\tau  :=(0,\tau)\times P.
$$
We put
$$
S  :=\bigcup_{i=1}^4 S_i,\qquad
B  :=\{0\}\times \overline{P},\qquad
\Gamma  :=S\cup B
$$
and define the auxiliary functions $v$, $w$, and  $\wt{w}$ by
\begin{equation}\label{14}
\begin{split}
v(t,x,y)&:=u(t,x)-u(t,y),\\
w(t,x,y)&:= v(t,x,y)-h(x-y),\\
\wt{w}(t,x,y)&:=e^{-t}w(t,x,y)
\end{split}
\end{equation}
for $(t,x,y)\in\overline{P}_\tau$.
Derivatives with respect to the second and third variable we denote as derivatives with respect to $x$ and $y$, respectively. To keep notation as simple as possible we also use 
$u_x :=  u_y := \partial_2 u$. We shall show that $\wt{w}$ does not attain a positive maximum in $\overline{P}_\tau$. Let $(t,x,y)\in P_\tau\setminus \Gamma $ and note that $(t,x)$ and $(t,y)$ belong to $\Omega_\tau$. Thus 
\bqnn
-u_t(t,x)+a(t,x,u,u_x)u_{xx}+f(t,x,u,u_x)=
-u_t(t,y)+a(t,y,u,u_y)u_{yy}+f(t,y,u,u_y)=0 .                        
\eqnn
by \eqref{1}, where we use here and in the following the notation
$$
a(t,y,u,u_y)u_{yy}:=a(t,y,u(t,y),u_y(t,y))u_{yy}(t,y)\,.
$$
Subtracting the two equations and using the definition of $v$ we obtain
\begin{equation}\label{259}
\bsp
-v_t(t,x,y)  +a(t,x,u,v_x)v_{xx}+f(t,x,u,v_x)
             +a(t,y,u,-v_y)v_{xx}-f(t,y,u,-v_y)=0.
\end{split}
\end{equation}
Recalling from \eqref{6} that
\begin{equation}\label{260}
\begin{split}
f(t,x,u,v_x)  \le a(t,x,u,v_x)\psi(|v_x|),\qquad
-f(t,y,u,-v_y) & \le a(t,y,u,-v_y)\psi(|v_y|)
\end{split}
\end{equation}
and using the notation
\bqnn
A(t,x):=a(t,x,u,v_x),\qquad
A(t,y):=a(t,y,u,-v_y).
\eqnn
we derive from \eqref{259} and \eqref{260} the inequality
\begin{equation*}
-v_t(t,x,y)+A(t,x)(v_{xx}+\psi(|v_x|))+A(t,y)(v_{yy}+\psi(|v_y|)) \ge 0.
\end{equation*}
Note that \eqref{11} implies
\begin{equation*}
A(t,x)(h''(x-y)+\psi(|h'(x-y)|))+A(t,y)(h''(x-y)+\psi(|h'(x-y)|)) = 0.
\end{equation*}
Subtracting the previous (in-)equalities yields
\bqnn
-w_t(t,x,y)+A(t,x)w_{xx}(t,x,y)+A(t,y)w_{yy}(t,x,y)+r(t,x,y) \ge 0,
\eqnn
where
\begin{equation*}
r(t,x,y):=\psi(|v_x(t,x,y)|)-\psi(|h'(x-y)|)+\psi(|v_y(t,x,y)|)-\psi(|h'(x-y)|),
\end{equation*}
that is,
\begin{equation}\label{27}
-\wt{w}_t-\wt{w}+A(t,x)\wt{w}_{xx}+A(t,y)\wt{w}_{yy}+re^{-t}\ge 0, \quad (t,x,y)\in\overline{P}_\tau\setminus \Gamma .
\end{equation}
Assume for contradiction that the function $\wt{w}$ attains its positive maximum at $(t_0,x_0,y_0)\in\overline{P}_\tau\setminus \Gamma$. At this point of maximum there holds
\begin{equation}\label{29}
\begin{split}
\wt{w}(t_0,x_0,y_0)  > 0, \quad \wt{w}_t(t_0,x_0,y_0) \ge 0, \quad
\wt{w}_{xx}(t_0,x_0,y_0)  \le 0, \quad \wt{w}_{yy}(t_0,x_0,y_0) \le 0.\\
\end{split}
\end{equation}
Moreover,
\begin{equation*}
\wt{w}_{x}(t_0,x_0,y_0) = \wt{w}_{y}(t_0,x_0,y_0) = 0
\end{equation*}
from which, by definition of $\wt{w}$, we get
\begin{equation*}
e^{-t}(v_x(t_0,x_0,y_0)-h'(x_0-y_0))=e^{-t}(v_y(t_0,x_0,y_0)+h'(x_0-y_0))=0.
\end{equation*}
Thus
\begin{equation}\label{228}
v_x(t_0,x_0,y_0)=-v_y(t_0,x_0,y_0)=h'(x_0-y_0)
\quad\text{ and }\quad
r(t_0,x_0,y_0)=0.
\end{equation}
Since both $A(t_0, x_0)$ and $A(t_0, y_0)$ are non-negative  it follows from \eqref{29} and \eqref{228} that
\begin{equation}\label{230}
-\wt{w}_t-\wt{w}+A(t,x)\wt{w}_{xx}+A(t,y)\wt{w}_{yy}+re^{-t} < 0 \quad \text{at }\ (t,x,y)= (t_0,x_0,y_0)
\end{equation}
which contradicts \eqref{27}. Therefore, $\wt{w}$ does not attain a positive maximum in $\overline{P}_T\setminus \Gamma$.

We next show that $\wt{w}$ does also not attain a positive maximum on $\Gamma$ for which we distinguish the two cases $\kappa_0<2\ell$ and $\kappa_0\geq 2\ell$. 

{\bf Case 1: $\kappa_0 < 2\ell$}. In this case we have $-\ell<y<\ell$ for $(t,\ell,y)\in S_2$. Thus, the function $u$ satisfies equations~\eqref{1} and \eqref{2} at the points $(t,y)$ and $(t,\ell)$, that is,
\begin{equation*}
\begin{split}
-u_t(t,y)+a(t,y,u,u_y)u_{yy}+f(t,y,u,u_y) & =0,\\
-u_t(t,\ell)-b(t,\ell,u,u_x)u_{x}+g(t,\ell,u,u_x) & =0.
\end{split}
\end{equation*}
Subtracting the two equations and recalling \eqref{6} and the definitions of $v$ and $A(t,y)$, we derive
the inequality 
\begin{equation}\label{37}
-v_t(t,\ell,y)+A(t,y)(v_{yy}+\psi(|v_y|))+r_1(t,\ell,y) \ge 0,
\end{equation}
where
\begin{equation*}
r_1(t,\ell,y):=-b(t,\ell,u,v_x)v_{x}+g(t,\ell,u,v_x) \,.
\end{equation*}
Since $(t,\ell,y)\in S_2P_T$ and hence $0 \leq \ell-y \leq \kappa_0$, it follows from \eqref{11} that
\begin{equation*}
A(t,y)(h''(\ell-y)+\psi(|h'(\ell-y)|)) = 0 .
\end{equation*}
Subtracting this from \eqref{37} and using the definition of $\wt{w}$, we derive
\begin{equation}\label{41}
-\wt{w}_t-\wt{w}+A(t,y)\wt{w}_{yy}+r_2e^{-t}+r_1e^{-t}\ge 0,\quad (t,\ell,y)\in S_2,
\end{equation}
where
\begin{equation*}
r_2(t,\ell,y):=A(t,y)(\psi(|v_y(t,\ell,y)|)-\psi(|h'(\ell-y)|)).
\end{equation*}
Assume now for contradiction that
$\wt{w}$ attains its maximum at some point $(t_0,\ell,y_0)\in S_2$. At this maximum point there holds
\begin{equation*}
\begin{split}
&\wt{w}(t_0,\ell,y_0) > 0, \qquad \wt{w}_t(t_0,\ell,y_0) \ge 0, \qquad \wt{w}_{yy}(t_0,\ell,y_0) \le 0 .
\end{split}
\end{equation*}
Furthermore, $(t_0,\ell,y_0)$ is an inner point of $S_2$, hence
$\wt{w}_{y}(t_0,\ell,y_0) = 0$.
Since, by definition of $\wt{w}$,
\begin{equation*}
e^{-t_0}(v_y(t_0,\ell,y_0)+h'(\ell-y_0))=0
\end{equation*}
we have $r_2(t_0,\ell,y_0)=0$.
Also note that
\begin{equation*}
\wt{w}_{x}(t_0,\ell,y_0)=e^{-t_0}(v_x(t_0,\ell,y_0)-h'(\ell-y_0))\ge 0,
\end{equation*}
that is, using \eqref{pp},
\begin{equation}\label{233}
v_x(t_0,\ell,y_0)  \ge h'(\ell-y_0) \ge q_0>0
\end{equation}
and consequently, due to \eqref{9bNEU}, 
\begin{equation*}
r_1(t_0,\ell,y_0)\overset{}{=} -b(t_0,\ell,u,v_x)v_{x}+g(t_0,\ell,u,v_x) \le 0.
\end{equation*}
In summary, we have
\begin{equation}\label{234}
-\wt{w}_t-\wt{w}+A(t,y)\wt{w}_{yy}+r_2e^{-t_0}+r_1e^{-t_0}< 0 \quad \text{at }\ (t,x,y)=(t_0,\ell,y_0),
\end{equation}
in contradiction to \eqref{41}. Therefore, $\wt{w}$ does not attain a positive maximum in $S_2$.
In the same way we consider $\wt{w}$ on $S_1$. Given $(t,x,-\ell)\in S_1$, we derive from
\begin{equation*}
\begin{split}
-u_t(t,x)+a(t,x,u,u_x)u_{xx}+f(t,x,u,u_x) =     
-u_t(t,-\ell)+b(t,-\ell,u,u_x)u_{x}+g(t,-\ell,u,u_x)  =0     
\end{split}
\end{equation*}
by subtraction and by using \eqref{6} the inequality
\begin{equation*}                                                   
-v_t(t,x,-\ell)+A(t,x)(v_{xx}+\psi(|v_x|))+r_3(t,x,-\ell) \ge 0
\end{equation*}
where
\begin{equation*}                                                    
r_3(t,x,-\ell):=b(t,-\ell,u,-v_y)v_{y}-g(t,-\ell,u,-v_y).
\end{equation*}
Since, due to \eqref{11},
\begin{equation*}                                                    
A(t,x)(h''(x+\ell)+\psi(|h'(x+\ell)|)) = 0
\end{equation*}
we further obtain
\begin{equation}\label{56}
-\wt{w}_t-\wt{w}+A(t,x)\wt{w}_{xx}+r_4e^{-t}+r_3e^{-t}\ge 0, \quad (t,x,-\ell)\in S_1
\end{equation}
with
\begin{equation*}                                                   
r_4(t,x,-\ell):=A(t,x)(\psi(|v_x(t,x,-\ell)|)-\psi(|h'(x+\ell)|)).
\end{equation*}
Assume for contradiction that  $\wt{w}$ attains its positive maximum at a point $(t_0,x_0,-\ell)\in S_1$ with $x_0\neq \ell$. 
From $\wt{w}_{x}(t_0,x_0,-\ell)=0$ we deduce $r_4(t_0,\ell,y_0)=0$.
Furthermore, since
\begin{equation*}                                                    
0\le -\wt{w}_{y}(t_0,x_0,-\ell)=e^{-t_0}(-v_y(t_0,x_0,-\ell)-h'(x_0+\ell)),
\end{equation*}
it follows from \eqref{pp} that
\begin{equation*}
-v_y(t_0,x_0,-\ell)  \ge h'(x_0+\ell) \ge q_0>0,
\end{equation*}
and thus $r_3(t_0,x_0,-\ell)\le 0$ due to  \eqref{9bNEU}. Consequently,
\begin{equation*}
-\wt{w}_t-\wt{w}+A(t,x)\wt{w}_{xx}+r_4e^{-t}+r_3e^{-t}< 0 \quad \text{ at }\ (t,x,y)=(t_0,x_0,-\ell)
\end{equation*}
in contradiction to\eqref{56}. Thus, $\wt{w}$ does not attain a positive maximum on $S_1$.

Next, we consider $\wt{w}$ on $S_3$ and $S_4$. Since $x=y$ for $(t,x,y)\in S_3$ and $h(0)=0$, we clearly have $\wt{w}=0$ on $S_3$ so that $\wt{w}$ does not attain a positive maximum on $S_3$.
Given $(t,x,y)\in S_4$ we have $x-y=\kappa_0$. Now \eqref{11} implies $h(\kappa_0)=2M = 2|{u(t,x)}|_{\Om_\tau}$, hence
\begin{equation*}
\wt{w}(t,x,y) = u(t,x)-u(t,y)-h(\kappa_0)\le 0,
\end{equation*}
and $\wt{w}$ thus does not attain a positive maximum on $S_4$ either.

Finally, since for $(t,x,y)\in B$ we have $t=0$, \eqref{275} yields
\begin{equation*}
\wt{w}(t,x,y) = (u^0(x)-u^0(y)-h(x-y))\le 0,
\end{equation*}
and $\wt{w}$ does not attain a positive maximum on $\Gamma$ if $\kappa_0 < 2\ell$.

{\bf Case 2: $\kappa_0 \geq 2\ell$}. In this case, the set $S_4$ is empty while the sets $S_1$ and $S_2$ also contain the set $\{(t,l,-\ell): t\in (0,\tau]\}$. We have to show therefore that $\wt{w}$ does not attain a positive maximum on this set.
For, note that at the points $(t,\ell)$ and $(t,-\ell)$ the function $u$ satisfies according to \eqref{2}
\begin{equation*}
\begin{split}
-u_t(t,\ell)-b(t,\ell,u,u_x)u_{x}+g(t,\ell,u,u_x) =
-u_t(t,-\ell)+b(t,-\ell,u,u_x)u_{x}+g(t,-\ell,u,u_x)  =0
\end{split}
\end{equation*}
from which we get
\begin{equation}\label{262}
\bsp
-w_t(t,\ell,-\ell)  -b(t,\ell,u,u_x)u_{x}+g(t,\ell,u,u_x) -b(t,-\ell,u,u_x)u_{x}-g(t,-\ell,u,u_x)=0.
\end{split}
\end{equation}
Assume for contradiction that $\wt{w}$ attains a positive maximum at some point $(t_0,\ell,-\ell)$ with $t_0\in (0,\tau]$. Then
\begin{equation*}
\begin{split}
 \wt{w}_{x}(t_0,\ell,-\ell) & =e^{-t}(u_x(t_0,\ell)-h'(2\ell))\ge 0, \\
-\wt{w}_{y}(t_0,\ell,-\ell) & =e^{-t}(u_y(t_0,-\ell)-h'(2\ell))\ge 0
\end{split}
\end{equation*}
and \eqref{11} implies
\begin{equation*}
\begin{split}
u_x(t_0,\ell) & \ge h'(2\ell)\ge q_0 > 0, \\
u_y(t_0,-\ell) & \ge h'(2\ell)\ge q_0 > 0.
\end{split}
\end{equation*}
From this and \eqref{9bNEU} it follows that
\begin{equation*}
\begin{split}
-b(t,\ell,u,u_x)u_{x}+g(t,\ell,u,u_x)  \le 0 , \qquad
-b(t,-\ell,u,u_x)u_{x}-g(t,-\ell,u,u_x)  \le 0
\end{split}
\end{equation*}
and hence, from \eqref{262}, that
\bqn\label{263}
-w_t(t_0,\ell,-\ell)\ge 0.
\eqn
On the other hand, since $\wt{w}$ attains on $\{(t,\ell,-\ell):\; t\in (0,\tau]\}$ a positive maximum, we have
$$
\wt{w}(t_0,\ell,-\ell)>0, \quad \wt{w}_t(t_0,\ell,-\ell)\ge 0
$$
and further
$$
-w_t(t_0,\ell,-\ell)=-e^t(\wt{w}(t_0,\ell,-\ell)+\wt{w}_t(t_0,\ell,-\ell))< 0
$$
contradicting \eqref{263}. So, $\wt{w}$ does also not attain a positive maximum on $\Gamma$ if $\kappa_0 \ge 2\ell$.
\\

Consequently, we have shown that $\wt{w}$ does not attain a positive maximum in $\overline{P}_\tau$ and thus
\bqnn
\wt{w}(t,x,y) \le 0, \quad (t,x,y)\in\overline{P}_\tau.
\eqnn
In the same way one shows that the function $\wt{w}_1$, defined by
\bqnn
\wt{w}_1(t,x,y):=u(t,y)-u(t,x)-h(x-y),\quad  (t,x,y)\in  \overline{P}_\tau,
\eqnn
satisfies
\bqnn
\wt{w}_1(t,x,y) \le 0, \quad (t,x,y)\in\overline{P}_\tau.
\eqnn
Together we deduce
\bqnn
|u(t,x)-u(t,y)|\le h(x-y), \quad (t,x,y)\in\overline{P}_\tau\,,
\eqnn
and putting
\bqnn
Q_\tau:=\{(t,x,y): (t,y,x)\in P_\tau\}\,,
\eqnn
and using symmetry with respect to the variables we conclude
\begin{equation*}
|u(t,x)-u(t,y)|\le h(|x-y|), \quad (t,x,y)\in\overline{Q}_\tau\cup \overline{P}_\tau,
\end{equation*}
with
\begin{equation*}
\overline{Q}_\tau \cup \overline{P}_\tau=\{(t,x,y)\in\R^3: \; 0\le|x-y|\le\kappa_0, \; |x|\le \ell, \; |y|\le \ell, \; t\in [0,\tau]\}.
\end{equation*}
Since $h(0)=0$ we obtain
\begin{equation}\label{264}
|u_x(t,x)|\le h'(0)=q_1, \quad (t,x)\in \Om_\tau.
\end{equation}
According to \eqref{265} the number $q_1$ depends only on $K$, $M$, $q_0$, and $\psi$. This proves Theorem~\ref{A1}
\end{proof}

It may be worthwhile to note that \eqref{265} yields more information on the gradient bound $M_1=q_1$. 
Condition \eqref{9bNEU} is needed because of the dynamical boundary
condition \eqref{2}. A simple situation for which \eqref{9bNEU} holds is obtained by strengthening condition \eqref{9} to
\bqn\label{266}
\int\limits_{}^\infty\frac{\rho\,\rd\rho}{\psi(\rho)} = \infty .
\eqn
Indeed, in this case there exists for each $q_0>0$ some $q_1>0$ such that \eqref{265} is satisfied. Then \eqref{9bNEU} holds e.g. if $b\ge \delta>0$ and $g$ is bounded.

\subsubsection{\textbf{Gradient estimates under a weakened Bernstein-Nagumo condition}}\label{subsection_2}

In \cite{Ter} it was shown for classical (i.e. Dirichlet, Neuman, or Robin) boundary conditions  that gradient estimates hold true for the more general situation that a term $f_1(t,x,u,u_x)$ is added on the right hand side of \eqref{1} which may arbitrarily increase in the  gradient variable provided it satisfies a homogeneity condition (see \eqref{225} below). We similarly extend Theorem~\ref{A1} for dynamical boundary conditions.

More precisely, we may consider 
\begin{equation}\label{223}
u_t-a(t,x,u,u_x)u_{xx}=f(t,x,u,u_x)+f_1(t,x,u,u_x), \quad (t,x)\in(0,T)\times(-\ell,\ell),
\end{equation}
together with the boundary condition
\begin{equation}\label{224}
u_t\pm b(t,x,u,u_x)u_{x}=g(t,x,u,u_x)+g_1(t,x,u,u_x), \quad t\in(0,T), \quad x=\pm\ell\,,
\end{equation}
where we impose on $f_1$ and $g_1$ the following conditions:
\bqn\label{225}
\bsp
f_1(t,y,z_1,\pm p) -f_1(t,x,z_2,\pm p) \geq 0 
\end{split}
\eqn
for $-\ell \leq x \leq y \leq \ell$, $-M\leq z_1 \leq z_2 \leq M$, $p\geq 0$,
\bqn\label{226}
\bsp
f_1(t,x,z_1,\pm p_1) -g_1(t,\ell,z_2,\pm p_2) \geq 0
\end{split}
\eqn
for $-\ell \leq x \leq \ell$,\; $-M\leq z_1 \leq z_2 \leq M$,\; $q_0 \leq p_1 \leq p_2$, and
\bqn\label{227}
\bsp
g_1(t,\ell,z_1,-p_1) & -f_1(t,x,z_2,-p_2) \geq 0, \qquad g_1(t,-\ell,z_1,p_1)  -f_1(t,x,z_2,p_2) \geq 0
\end{split}
\eqn
for $-\ell \leq x \leq \ell$,\; $-M\leq z_1 \leq z_2 \leq M$,\; $q_0 \leq p_2 \leq p_1$.
 Then we can prove:

\begin{theorem}\label{Theorem_11}
Let $M>0$ and suppose \eqref{10}-\eqref{9bNEU} and \eqref{225}--\eqref{227}.
Then there exists a constant
$M_1:=M_1(q_0, M,\psi,K)$ such that if $u$ is any  classical solution to \eqref{223}, \eqref{224} on $\Om_\tau$ with $\tau\le T$ subject to $u(0,\cdot)=u^0$ satisfying $|u|_{\Om_\tau}\le M$, then  $|u_x|_{\Om_\tau}\leq M_1$.
\end{theorem}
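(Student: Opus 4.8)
The plan is to rerun the doubling-of-variables argument of Theorem~\ref{A1} almost verbatim, treating $f_1$ and $g_1$ as perturbations whose sign at each candidate maximum point is dictated by \eqref{225}--\eqref{227}. I would keep unchanged the number $q_1$ defined by \eqref{265}, the auxiliary function $h$ solving \eqref{11}, the sets $S_1,\dots,S_4$, $P$, $\Gamma$, and the comparison functions $v(t,x,y)=u(t,x)-u(t,y)$, $w=v-h(x-y)$ and $\wt{w}=e^{-t}w$. Since none of $h$, $v$, $w$, $\wt{w}$, nor the Bernstein--Nagumo bound \eqref{6}, is altered, the whole differential apparatus of Theorem~\ref{A1} transfers; the single new effect is that subtracting two copies of \eqref{223} (respectively one copy of \eqref{223} and one of \eqref{224}) leaves one extra summand in each differential inequality, and the entire task reduces to checking that this summand carries the favourable sign at the relevant maximum.

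In the interior part $\overline{P}_\tau\setminus\Gamma$, subtracting \eqref{223} at $(t,y)$ from \eqref{223} at $(t,x)$ adds to \eqref{27} the term $\bigl(f_1(t,x,u,v_x)-f_1(t,y,u,-v_y)\bigr)e^{-t}$. At a hypothetical interior positive maximum $(t_0,x_0,y_0)$, the first-order relations \eqref{228} force $v_x=-v_y=h'(x_0-y_0)=:p\ge q_0>0$, so both gradient slots carry the same value $p\ge q_0$, while positivity of $\wt{w}$ together with $x_0>y_0$ gives $u(t_0,y_0)\le u(t_0,x_0)$. These are exactly the orderings appearing in \eqref{225}, which therefore renders the extra summand non-positive; the left-hand side of \eqref{27} thus remains strictly negative at $(t_0,x_0,y_0)$, reproducing the contradiction of Theorem~\ref{A1}.

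The boundary arcs $S_1,S_2$ are treated analogously. On $S_2$ the new summand is $g_1(t,\ell,u,v_x)-f_1(t,y,u,-v_y)$; at its maximum \eqref{233} gives $v_x\ge h'(\ell-y_0)=-v_y\ge q_0$, i.e.\ the two gradient slots are ordered $q_0\le -v_y\le v_x$, and the positive maximum yields $u(t_0,y_0)\le u(t_0,\ell)$, which is precisely the range in \eqref{226}; hence the summand is $\le 0$. The arc $S_1$ is handled by the second inequality in \eqref{227} in the same fashion. On $S_3$, $S_4$ and on the initial slice $B$ nothing changes, since $\wt{w}\le 0$ there follows from $h(0)=0$, $h(\kappa_0)=2M$ and \eqref{275} exactly as before. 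Once $\wt{w}\le 0$ on $\overline{P}_\tau$, and the same for the companion function $\wt{w}_1$, the bound $|u_x|_{\Om_\tau}\le h'(0)=q_1$ follows as in Theorem~\ref{A1}, with $q_1$ depending only on $q_0,M,\psi,K$ through \eqref{265}.

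The step I expect to be the main obstacle is the corner $(t_0,\ell,-\ell)$ arising in Case~2 ($\kappa_0\ge 2\ell$). Here one subtracts the two boundary relations \eqref{224} at $x=\pm\ell$, so that \eqref{262} acquires the difference $g_1(t,-\ell,u,\cdot)-g_1(t,\ell,u,\cdot)$ of the boundary nonlinearities at two distinct points; at the maximum the first-order conditions give $u_x(t_0,\ell)\ge h'(2\ell)\ge q_0$ and $u_y(t_0,-\ell)\ge h'(2\ell)\ge q_0$, while $u(t_0,-\ell)\le u(t_0,\ell)$. Unlike the interior and the single arcs, no two gradient slots coincide and the difference is between two boundary terms, so one must feed it through the full strength of \eqref{226}--\eqref{227} in order to conclude $-w_t(t_0,\ell,-\ell)\ge 0$, contradicting $-w_t<0$ at the positive maximum. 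Verifying this matching of the corner data to the precise $z$- and $p$-ranges in \eqref{226}--\eqref{227} is the only genuinely delicate bookkeeping; everywhere else the perturbations $f_1,g_1$ enter linearly and are simply shown to carry the correct sign.
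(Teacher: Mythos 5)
Your proposal is correct and takes essentially the same route as the paper: its proof of Theorem~\ref{Theorem_11} likewise reruns the doubling argument of Theorem~\ref{A1} verbatim, replacing \eqref{27} and \eqref{41} by \eqref{229} and \eqref{231}, and kills the extra summands at candidate maxima via \eqref{225} in the interior (where \eqref{228} gives $v_x=-v_y$) and \eqref{226} on $S_2$ (where $v_x\ge -v_y\ge q_0$), declaring the remaining cases (including $S_1$ via \eqref{227} and the companion function $\wt{w}_1$) analogous. The corner $(t_0,\ell,-\ell)$ with $\kappa_0\ge 2\ell$ that you flag as delicate is precisely what the paper subsumes under ``the rest follows analogously,'' and your bookkeeping does close: since $u_x(t_0,\pm\ell)\ge h'(2\ell)\ge q_0$ and $u(t_0,-\ell)\le u(t_0,\ell)$ at the positive maximum, chaining the second inequality of \eqref{227} with \eqref{226} through the intermediate value $f_1\big(t_0,x,u(t_0,-\ell),q_0\big)$ gives $g_1\big(t_0,-\ell,u(t_0,-\ell),u_x(t_0,-\ell)\big)\ge g_1\big(t_0,\ell,u(t_0,\ell),u_x(t_0,\ell)\big)$, which together with \eqref{9bNEU} restores \eqref{263} and the contradiction.
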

\begin{proof}
Except for small changes the proof is the same as for Theorem~\ref{A1}. Indeed, using \eqref{223} instead of \eqref{1}, inequality  \eqref{27} has to be replaced by
\begin{equation}\label{229}
\bsp
-\wt{w}_t-\wt{w}+A(t,x)\wt{w}_{xx}+A(t,y)\wt{w}_{yy}  +re^{-t} \ge f_1(t,y,u,-v_y)-f_1(t,x,u,v_x), \quad (t,x,y)\in\overline{P}_\tau\setminus \Gamma .
\end{split}
\end{equation}
Assuming that $\wt{w}$ attains a positive maximum at some point $(t_0, \ell ,y_0)\in\overline{P}_\tau\setminus \Gamma$, we deduce (see \eqref{228})
\bqnn
v_x(t_0, \ell ,y_0)=-v_y(t_0, \ell ,y_0)
\eqnn
and so it follows from \eqref{229} and \eqref{225} that
\begin{equation*}
-\wt{w}_t-\wt{w}+A(t,x)\wt{w}_{xx}+A(t,y)\wt{w}_{yy}+re^{-t}\ge 0 \quad\text{ at } \ (t,x,y)=(t_0,x_0,y_0)
\end{equation*}
in contradiction to \eqref{230}, the latter being derived exactly as in Theorem \ref{A1}. Following the proof of Theorem~\ref{A1} one then considers $\wt{w}$ on $S_2$. Replacing $f$ by $f+f_1$ and $g$ by $g+g_1$, we obtain instead of \eqref{41} the inequality
\begin{equation}\label{231}
\bsp
-\wt{w}_t-\wt{w}+ & A(t,y)\wt{w}_{yy}+r_2e^{-t}+r_1e^{-t
}\\
&\ge e^{-t}\big[f_1(t,y,u(t,y),-v_y(t,\ell,y))-g_1(t,\ell,u(t,y),-v_y(t,\ell,y))\big]
\end{split}
\end{equation}
for $(t,\ell,y)\in S_2$.
Assuming that $\wt{w}$ attains a positive maximum at $(t_0,\ell,y_0)\in S_2$ then (see \eqref{233})
\bqnn
v_x(t_0,\ell,y_0) \geq -v_y(t_0,\ell,y_0) \geq q_0
\eqnn
and \eqref{226}, \eqref{231} entail that
\bqnn
-\wt{w}_t-\wt{w}+A(t,y)\wt{w}_{yy}+r_2e^{-t}+r_1e^{-t}\geq 0 \quad \text{ at }\ (t_0,\ell,y_0)\in S_2
\eqnn
contradicting \eqref{234}. The rest follows analogously to the proof of Theorem~\ref{A1}.
\end{proof}

\subsubsection{\textbf{Gradient estimates for mixed boundary conditions}}\label{subsection_3}
In \cite[Lemma 1-Lemma 3]{Ter} gradient estimates were derived for solutions to \eqref{1} subject to Dirichlet, Neuman, or Robin type boundary conditions. Inspection of the proofs therein
and the one of Theorem~\ref{A1} shows that such estimates can be obtained under the assumptions of Theorem~\ref{A1} when a Dirichlet, Neuman, or Robin type boundary conditions is imposed on one boundary part and the dynamical boundary condition \eqref{2} on the other. 
We formulate the precise result in the exemplary constellation that
$u$ satisfies at $x=-\ell$ the dynamical boundary condition
\bqn\label{235}
u_t - b(t, -\ell,u,u_x)u_{x}=g(t, -\ell,u,u_x), \quad  t\in (0,T),
\eqn
and at $x=\ell$ the homogeneous Dirichlet condition
\bqn\label{236}
u(t,\ell)=0, \quad  t\in (0,T).
\eqn
All other cases can be considered as well.

\begin{theorem}\label{Theorem_12}
Let $M>0$ and suppose \eqref{10}-\eqref{6} and
\begin{align*}
-g(t, - \ell, z, p)  \le b(t, - \ell, z, p)p , \qquad \label{9b}
-g(t,  \ell, z, -p)  \le b(t,  \ell, z, -p)p , 
\end{align*}
for $(t,z)\in (0,T]\times [-M,M]$ and $p\ge q_0$.
Then there exists a constant
$M_1:=M_1(q_0, M,\psi,K)$  such that if $u$ is any  classical solution to \eqref{1}, \eqref{235}, \eqref{236} on $\Om_\tau$ with $\tau\le T$ subject to $u(0,\cdot)=u^0$ satisfying $|u|_{\Om_\tau}\le M$, then $|u_x|_{\Om_\tau}\leq M_1$.
\end{theorem}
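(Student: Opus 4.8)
The plan is to transcribe the doubling-of-variables maximum-principle argument of Theorem~\ref{A1} to the present mixed situation, replacing the boundary step at $x=\ell$ by the device used for Dirichlet data in \cite[Lemma~1--Lemma~3]{Ter}. Concretely, I would keep the same Bernstein--Nagumo barrier $h$ solving \eqref{11} (so $h'\ge q_0\ge K$, $h(0)=0$, $h(\kappa_0)=2M$, and $q_1=h'(0)$ is fixed by \eqref{265} in terms of $q_0,M,\psi,K$ thanks to \eqref{9}), the same domain $P_\tau$ with its boundary pieces $S_1,\dots,S_4$, $B$, $\Gamma$, and the same comparison functions $v,w,\wt w$ from \eqref{14}. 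The goal is again to show that $\wt w$, and afterwards its mirror image $\wt{w}_1(t,x,y):=u(t,y)-u(t,x)-h(x-y)$, cannot attain a positive maximum on $\overline{P}_\tau$; this yields $|u(t,x)-u(t,y)|\le h(|x-y|)$ and hence $|u_x|_{\Om_\tau}\le h'(0)=q_1=:M_1$ exactly as in \eqref{264}.

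The interior step is verbatim that of Theorem~\ref{A1}: for $(t,x,y)\in P_\tau\setminus\Gamma$ both $(t,x)$ and $(t,y)$ are interior points, so subtracting the two copies of \eqref{1} and invoking \eqref{6} together with \eqref{11} produces the differential inequality \eqref{27}, whose violation at a positive maximum (via \eqref{29}, \eqref{228}) is the required contradiction. The boundary piece $S_1$, where $y=-\ell$, is likewise unchanged: there $u$ obeys the dynamical condition \eqref{235} at $-\ell$ and \eqref{1} at the interior point $(t,x)$, and at a putative positive maximum the outward sign of $\wt w_y$ forces $-v_y\ge h'(x_0+\ell)\ge q_0>0$, so that the first displayed sign condition $-g(t,-\ell,z,p)\le b(t,-\ell,z,p)p$ (the upper-sign instance of \eqref{9bNEU}) makes the boundary remainder nonpositive and contradicts the analogue of \eqref{56}.

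The genuinely new ingredient, and the step I expect to be the main obstacle, is the boundary piece $S_2$ at $x=\ell$, where only the Dirichlet condition \eqref{236} is available and the subtraction trick of Theorem~\ref{A1}, which needed a \emph{dynamical} equation at the boundary, no longer applies directly. Here I would exploit that $u(\cdot,\ell)\equiv0$ forces $u_t(\cdot,\ell)\equiv0$: at a point $(t_0,\ell,y_0)\in S_2$ the interior equation \eqref{1} holds at $(t_0,y_0)$, and combining it with $u_t(t_0,\ell)=0$, with \eqref{6}, and with $h''=-\psi(|h'|)$ from \eqref{pp} yields a favourable inequality in the $(t,y)$-variables alone. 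At a positive maximum one has $\wt w_{yy}\le0$, $\wt w_t\ge0$ and $\wt w_y=0$ (whence $|u_y(t_0,y_0)|=h'(\ell-y_0)$ annihilates the $\psi$-remainder), while the outward sign $\wt w_x\ge0$ pins $u_x(t_0,\ell)\ge h'(\ell-y_0)\ge q_0>0$; the second displayed sign condition $-g(t,\ell,z,-p)\le b(t,\ell,z,-p)p$ imposed at $x=\ell$, together with the vanishing of $u_t(\cdot,\ell)$, then rules out the positive maximum, in complete parallel with the $S_2$-analysis leading to \eqref{234}. The pieces $S_3$ ($x=y$, where $\wt w\equiv0$ since $h(0)=0$), $S_4$ ($x-y=\kappa_0$, where $\wt w\le0$ since $h(\kappa_0)=2M$), and $B$ ($t=0$, where \eqref{275} gives $\wt w\le0$) are copied without change; in the remaining case $\kappa_0\ge2\ell$ the corner $(t,\ell,-\ell)$ is treated as in Case~2 of Theorem~\ref{A1}, now combining the Dirichlet value $u_t(\cdot,\ell)=0$ at $\ell$ with the dynamical condition \eqref{235} at $-\ell$.

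It then remains to repeat the entire argument for $\wt{w}_1$, with the roles of the two displayed inequalities interchanged, i.e. using their lower-sign counterparts obtained by replacing $p$ with $-p$. Adding the two one-sided bounds gives $|u(t,x)-u(t,y)|\le h(|x-y|)$ on $\overline{Q}_\tau\cup\overline{P}_\tau$ and, letting $y\to x$, the desired estimate $|u_x|_{\Om_\tau}\le q_1=M_1(q_0,M,\psi,K)$. The only place demanding real care is the Dirichlet step on $S_2$: one must verify that the reduced $(t,y)$-differential inequality is genuinely strict at a positive maximum once the time-invariance $u_t(\cdot,\ell)\equiv0$ is inserted, and that the sign condition at $x=\ell$ is compatible with the orientation $u_x(t_0,\ell)\ge q_0>0$ forced by $\wt w_x\ge0$, so that the contradiction survives the absence of a dynamical equation at $\ell$.
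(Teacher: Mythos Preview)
Your plan coincides with the paper's: the proof of Theorem~\ref{A1} is repeated verbatim, the only modification being the treatment of $\wt w$ and $\wt w_1$ on $S_2$, for which the paper simply invokes \cite[Lemma~3]{Ter}.

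One point of confusion in your $S_2$ analysis deserves correction. Since the boundary condition at $x=\ell$ is Dirichlet, the functions $b(\cdot,\ell,\cdot,\cdot)$ and $g(\cdot,\ell,\cdot,\cdot)$ never appear in the equations there, and the second displayed sign hypothesis is neither needed nor usable on $S_2$. The argument is in fact \emph{simpler} than in Theorem~\ref{A1}: subtracting $u_t(t,\ell)=0$ from the interior equation \eqref{1} at $(t,y)$ and invoking \eqref{6} and \eqref{11} yields
\[
-\wt w_t-\wt w+A(t,y)\,\wt w_{yy}+r_2\,e^{-t}\ge 0\quad\text{on }S_2,
\]
that is, \eqref{41} \emph{without} any boundary remainder $r_1$. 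At a putative positive maximum the contradiction is then immediate from $\wt w>0$, $\wt w_t\ge0$, $\wt w_{yy}\le0$, and $r_2=0$ (the last because $\wt w_y=0$). The outward information $\wt w_x\ge0$, hence $u_x(t_0,\ell)\ge q_0$, is irrelevant for this piece. The same simplification applies to $\wt w_1$ on $S_2$ and to the corner $(t,\ell,-\ell)$ in the case $\kappa_0\ge 2\ell$.
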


\begin{proof}
The proof is the same as for Theorem~\ref{A1} except when considering the behavior of  $\wt{w}$ and $\wt{w}_1$ on $S_2$. That both functions do not attain a positive maximum on $S_2$ can be shown as in \cite[Lemma 3]{Ter}.
\end{proof}

Theorem \ref{Theorem_12} seems to be interesting in connection with the blow up result proven in \cite[Proposition 2.14]{Con}. There the special case
\bqn\label{270}
\bsp
u_t-u_{xx} & =\psi(u_x), \quad\quad t>0, \; 0<x<\ell,\\
u(t,0) & =0, \qquad\quad\phantom{bl,} t \geq 0, \\
u_t(t,\ell)+u_{x}(t,\ell) & =0, \quad\qquad\phantom{bl,} t > 0, \\
u(0,x) & =u^0(x), \qquad\phantom{,} t \geq 0, \\
\end{split}
\eqn
of \eqref{1}, \eqref{235}, \eqref{236} was considered and shown that if  $\psi\in C^2(\R,(0,\infty))$ with
\bqnn
\underset{s\rightarrow \infty}{\lim\,\inf} \psi(s)>0
\eqnn
violates \eqref{266}, then there is $\ell>0$ such that for each $u^0\in C^2([0,\ell])$ with $u^0(0)=0$, the corresponding classical solution to \eqref{270} evolves a gradient singularity in finite time, even if the solution itself stays bounded  (see also \cite[Remark 2.15]{Con}).

Note that assumption \eqref{9bNEU} is satisfied for  \eqref{270}. If the function $\psi$ satisfies \eqref{266}, then \eqref{9} holds for any $q_0>0$ and any bounded solution $u$ to \eqref{270}. Thus, if $u$ is any bounded solution to \eqref{270} subject to some globally Lipschitz continuous initial value $u^0$, then $|u_x|$ does not blow up in finite time according to Theorem~\ref{Theorem_12}. Consequently, Theorem~\ref{Theorem_12} sharpens the statement of \cite[Proposition 2.14]{Con} in the following way: If \eqref{266} is violated and if $u$ is a maximal solution to \eqref{270} on $[0,t^+)\times [-\ell,\ell]$ with $ |u_x|_{\Om_t}{\rightarrow}\infty$ for $t\rightarrow t^+$ as constructed in \cite[Proposition 2.14]{Con}, then
\bqnn
\frac{1}{2}\int\limits_{0}^{\infty}\frac{\rho\,\rd\rho}{\psi(\rho)}\le |u|_{\Omega_{t^+}}\,.
\eqnn


\subsection{H\"older estimates on the gradient and global existence}\label{Sec1}

We next state a result on H\"older estimates on the gradient of a solution to \eqref{1}, \eqref{2}. It turns out that one can apply the existing theory  \cite[Chapter VI]{Lad} rather easily for which the assumptions of Theorem~\ref{A1} have to be strengthened just slightly.


\begin{corollary}\label{Theorem_15}
Let $u^0\in C^{1+\alpha}([-\ell,\ell])$ for some $\alpha\in (0,1)$ and let \eqref{10}-\eqref{9bNEU} be satisfied with $M>0$. 
There are numbers $\delta:=\delta(M,\alpha)\in (0,\alpha]$ and $C_*:=C_*(M,\psi,\langle u_x^0 \rangle_\Om^\alpha)$ such that if $u\in C^{1,2}(\overline{\Om}_\tau)$ is any classical solution  to \eqref{1}, \eqref{2} on $\Om_\tau$ with $\tau\le T$ subject to $u(0,\cdot)=u^0$ satisfying $\vert u\vert_{\Omega_\tau}\le M$, then
$\langle u_x \rangle^{(\frac{\delta}{2},\delta)}_{\Om_\tau}\leq C_*$.
\end{corollary}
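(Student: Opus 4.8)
The plan is to derive the H\"older estimate on $u_x$ by invoking the interior and boundary Schauder-type estimates from \cite[Chapter VI]{Lad} for quasilinear parabolic equations, applied to the equation satisfied by $u$ once the gradient has been bounded. The starting observation is that Theorem~\ref{A1} already supplies, under the present hypotheses \eqref{10}--\eqref{9bNEU}, an $L_\infty$-bound $|u_x|_{\Om_\tau}\le M_1$ with $M_1=M_1(M,\psi,K)$ depending only on the indicated quantities and not on $\tau$. Since $u^0$ is Lipschitz with constant $K$ (indeed $u^0\in C^{1+\alpha}$), this bound is available for every classical solution with $|u|_{\Om_\tau}\le M$. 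The point of the corollary is to upgrade this uniform gradient bound to a uniform H\"older bound.

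First I would freeze the coefficients along the solution, writing the equation in the form $u_t=\wt a(t,x)u_{xx}+\wt f(t,x)$ with $\wt a(t,x):=a(t,x,u,u_x)$ and $\wt f(t,x):=f(t,x,u,u_x)$, and similarly recast the boundary condition \eqref{2} as a first-order relation for $u$ at $x=\pm\ell$. Because $(u,u_x)$ now ranges in the compact set $[-M,M]\times[-M_1,M_1]$, the continuity (and the parabolicity $a>0$, $b>0$) of the data yields two-sided bounds $0<m^{-1}\le\wt a\le m$ and an $L_\infty$-bound on $\wt f$ on $\Om_\tau$, with constants depending only on $M$, $M_1$, and the moduli of the data on that compact set. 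This places the equation precisely in the setting covered by the quasilinear H\"older gradient estimates of \cite[Chapter VI]{Lad}: for a uniformly parabolic equation in one space dimension with bounded right-hand side and a bounded gradient, the De Giorgi--Nash--Moser machinery produces an interior (and, with the boundary operator controlled, up-to-the-boundary) H\"older estimate on $u_x$ of exponent $\delta=\delta(M,M_1,\ldots)\in(0,\alpha]$ and constant depending on the same data together with the initial H\"older seminorm $\la u_x^0\ra_\Om^\alpha$. Collecting dependencies through $M_1=M_1(M,\psi,K)$ gives the asserted form $\delta=\delta(M,\alpha)$ and $C_*=C_*(M,\psi,\la u_x^0\ra_\Om^\alpha)$.

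The genuinely delicate step is the estimate \emph{up to the boundary} $x=\pm\ell$, because the boundary condition \eqref{2} is dynamic rather than of standard Dirichlet/Neumann/Robin type, so one cannot cite the boundary Schauder estimates of \cite[Chapter VI]{Lad} verbatim. Here I would exploit the fact that \eqref{2} expresses $u_t(\cdot,\pm\ell)$ algebraically in terms of $u(\cdot,\pm\ell)$ and $u_x(\cdot,\pm\ell)$; combined with the already-known bounds on $|u|$ and $|u_x|$ this bounds $|u_t(\cdot,\pm\ell)|$ (exactly as in the passage \eqref{80a}--\eqref{80b} of Theorem~\ref{ExLTheorem1}), which yields control of the time-regularity of the boundary trace and thereby feeds the boundary case of the parabolic H\"older theory. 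The remaining verification is routine bookkeeping of how the constants and the exponent depend on the frozen bounds; the essential difficulty is confirming that the dynamic boundary term does not degrade the boundary H\"older estimate, and that the exponent $\delta$ and constant $C_*$ can be chosen uniformly in $\tau\le T$ since none of the controlling quantities depend on $\tau$.
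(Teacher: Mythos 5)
Your proposal is correct and follows essentially the same route as the paper: a uniform gradient bound $|u_x|_{\Om_\tau}\le M_1$ from Theorem~\ref{A1}, compactness of $[-M,M]\times[-M_1,M_1]$ to obtain uniform parabolicity and $L_\infty$-bounds on the frozen coefficients, the bound $|u_t(\cdot,\pm\ell)|\le c_1$ extracted from the dynamic condition \eqref{2}, and then the H\"older gradient theory of \cite{Lad}. The only difference is cosmetic: where you hedge about whether the boundary estimates apply, the paper cites \cite[Chapter VI, Theorem 5.1]{Lad} directly, which covers the situation precisely because the boundary trace's time derivative is bounded --- exactly the mechanism you identified.
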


\begin{proof}
Owing to Theorem~\ref{A1} there is a constant $M_1=M_1(M,\psi,K)$ independent of $\tau$ 
such that $|u_x|_{\Om_\tau}\le M_1$.
 Since $a$ and $f$ are continuous and $a$ is positive, it follows  that there is $\mu>0$ such that
\bqnn
\mu^{-1}\le  a(t,x,z,p) \le \mu\, , \quad |f(t,x,z,p)| \le \mu\,, \qquad (t,x,z,p)\in\overline{\Om}_T\times[-M,M]\times[-M_1,M_1].
\eqnn
Moreover, setting
\bqnn
c_1:=\max \big\{ |b(t,\pm \ell, z, p) p|+|g(t,\pm \ell, z, p)| \,;\, (t,z,p)\in [0,T]\times[-M,M]\times[-M_1,M_1]\big\}
\eqnn
we have $\max_{t\in [0,\tau]} |u_t(t,\pm \ell)| \leq c_1$ due to \eqref{2}. 
The assertion is now a consequence of \cite[Chapter VI, Theorem 5.1]{Lad}.
\end{proof}

Summarizing our findings we can simplify the criterion for global existence to problem \eqref{1}, \eqref{2a} from Theorem~\ref{ExLTheorem1}. While the latter requires a uniform bound on the H\"older norms of solutions, the following corollary states that bounds on the supremum norm are sufficient.

\begin{corollary}\label{c3}
Let $\alpha\in (0,1)$ and suppose that \eqref{66a} holds, where $b$ and $g$ are gradient-independent and $a>0$ and $b>0$. Consider $u^0\in C^{2+\alpha}([-\ell,\ell])$ satisfying the compatibility condition \eqref{66}. Suppose there is a constant $M>0$ such that \eqref{10}-\eqref{9bNEU} are satisfied. If the unique solution $u$ to \eqref{1}, \eqref{2a} subject to $u(0,\cdot)=u^0$ on the maximal interval of existence $[0,\tau_\infty)$ provided by Theorem~\ref{ExLTheorem1} satisfies $|u|_{{\Om}_\tau}\leq M$ for $\tau<\tau_\infty$, then $u\in \mC^{1+\frac{\alpha}{2}, 2+\alpha}(\OmT)$.
\end{corollary}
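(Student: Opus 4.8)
The plan is to combine the general global existence criterion from Theorem~\ref{ExLTheorem1} with the gradient estimates established in this section, thereby reducing global existence to a mere $L_\infty$-bound on $u$ itself. Recall that Theorem~\ref{ExLTheorem1} tells us, in the gradient-independent case \eqref{2a} with $b>0$, that the solution is global (i.e. $u\in\mC^{1+\frac{\alpha}{2},2+\alpha}(\OmT)$) as soon as the quantity in \eqref{uo}, namely
\[
|u|_{\Om_\tau}+|u_x|_{\Om_\tau}+\la u_x\ra^{(\frac{\alpha}{2},\alpha)}_{\Om_\tau},
\]
stays bounded uniformly in $\tau<\tau_\infty$. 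So the entire task is to control the second and third terms given a bound on the first.

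First I would record that the hypotheses of Corollary~\ref{Theorem_15} (and hence of Theorem~\ref{A1}) are met: the data satisfy \eqref{66a}, in particular the continuity and positivity assumptions needed in this section, $u^0\in C^{2+\alpha}([-\ell,\ell])\subset C^{1+\alpha}([-\ell,\ell])$ is in particular Lipschitz so that \eqref{10} holds for some $K$, and conditions \eqref{9}-\eqref{9bNEU} are assumed with the given $M$. The standing hypothesis is $|u|_{\Om_\tau}\le M$ for all $\tau<\tau_\infty$. Second, I would invoke Theorem~\ref{A1} to obtain a constant $M_1=M_1(M,\psi,K)$, independent of $\tau$, with $|u_x|_{\Om_\tau}\le M_1$ for every $\tau<\tau_\infty$; this bounds the second term in \eqref{uo}. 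Third, I would invoke Corollary~\ref{Theorem_15} to obtain $\delta\in(0,\alpha]$ and $C_*$, again independent of $\tau$, with $\la u_x\ra^{(\frac{\delta}{2},\delta)}_{\Om_\tau}\le C_*$. Since $\delta\le\alpha$ and the domain has finite time length, a standard interpolation between the $L_\infty$-bound $M_1$ and the $C^{\delta/2,\delta}$-seminorm upgrades this to a uniform bound on $\la u_x\ra^{(\frac{\alpha}{2},\alpha)}_{\Om_\tau}$, the third term in \eqref{uo}.

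With all three terms of \eqref{uo} bounded uniformly in $\tau<\tau_\infty$, the dichotomy in the second part of Theorem~\ref{ExLTheorem1} forces the non-blow-up alternative, so $u\in\mC^{1+\frac{\alpha}{2},2+\alpha}(\OmT)$, which is exactly the assertion. I expect the only genuinely non-routine point to be the H\"older-exponent upgrade in the third step: Corollary~\ref{Theorem_15} delivers H\"older continuity of the gradient only with the possibly smaller exponent $\delta$, whereas the criterion in Theorem~\ref{ExLTheorem1} is phrased with exponent $\alpha$. One must therefore argue, via the Proposition~\ref{ExLin1} bootstrap already used at the end of the proof of Theorem~\ref{ExLTheorem1}, that once $u$ lies in $\mC^{1+\frac{\delta}{2},2+\delta}(\Om_\tau)$ with uniform bounds, the frozen-coefficient functions $\wt a,\wt f,\wt b,\wt g$ acquire the full $\calpha$- respectively $C^{\frac{1+\alpha}{2}}$-regularity with $\tau$-independent norms, whence the linear Schauder estimate \eqref{ExLin2} returns a uniform $\mC^{1+\frac{\alpha}{2},2+\alpha}(\Om_\tau)$-bound. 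This is the same mechanism already displayed in \eqref{80e} and the lines following it, so in the write-up it is most economical to reference that argument rather than repeat it.
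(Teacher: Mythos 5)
Your overall route is exactly the paper's: Theorem~\ref{A1} gives the $\tau$-independent bound $|u_x|_{\Om_\tau}\le M_1$, Corollary~\ref{Theorem_15} gives $\la u_x\ra^{(\frac{\delta}{2},\delta)}_{\Om_\tau}\le C_*$, the bootstrap of \eqref{80a}--\eqref{80e} (Proposition~\ref{ExLin1} applied with exponent $\delta$, using \eqref{2a} to control $u_t(\cdot,\pm\ell)$ and hence the boundary coefficients) yields a uniform bound on $|u|^{(1+\frac{\delta}{2},2+\delta)}_{\Om_\tau}$, and the dichotomy of Theorem~\ref{ExLTheorem1} then concludes. One sentence in your third step is false as written, though: a ``standard interpolation between the $L_\infty$-bound $M_1$ and the $C^{\frac{\delta}{2},\delta}$-seminorm'' cannot upgrade the H\"older exponent from $\delta$ to $\alpha\ge\delta$ --- interpolation with $L_\infty$ only produces exponents \emph{below} $\delta$, and $L_\infty\cap C^{\delta}$ does not embed into $C^{\alpha}$ for $\alpha>\delta$. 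The correct mechanism is the one you yourself supply in your closing paragraph, and it is precisely what the paper does: run the Schauder bootstrap \emph{first} to get $|u|^{(1+\frac{\delta}{2},2+\delta)}_{\Om_\tau}\le c(M,\ell,T)$ uniformly in $\tau<\tau_\infty$, and then obtain the bound on $\la u_x\ra^{(\frac{\alpha}{2},\alpha)}_{\Om_\tau}$ by \emph{embedding from this second-order norm}, not by interpolation: boundedness of $u_{xx}$ makes $u_x$ Lipschitz, hence $\alpha$-H\"older, in $x$, while the seminorm $\la u_x\ra^{(\frac{1+\delta}{2})}_{t,\Om_\tau}$ dominates the $\frac{\alpha}{2}$-seminorm in time on $[0,T]$ since $\frac{1+\delta}{2}>\frac{\alpha}{2}$ for $\alpha<1$. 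With that one sentence repaired (or simply deleted, since your final paragraph already contains the right argument and the pointer to \eqref{80e}), your proof coincides with the paper's.
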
 


\begin{proof}
Let $u$ be the unique solution  to \eqref{1}, \eqref{2a} subject to $u(0,\cdot)=u^0$ on the maximal interval of existence $[0,\tau_\infty)$ provided by Theorem~\ref{ExLTheorem1} and suppose that $|u|_{{\Om}_\tau}\leq M$ for $\tau<\tau_\infty$.
Then, by Theorem~\ref{A1}, Corollary~\ref{Theorem_15} and arguments parallel to those at the end of the proof Theorem~\ref{ExLTheorem1} (cf. \eqref{80a}-\eqref{80e}) we have
$$
|u|^{(\frac{2+\delta}{2},2+\delta)}_{\Om_\tau} \le c(M,\ell,T)
$$
for some $\delta\in (0,\alpha]$ and some constant $c(M,\ell,T)$ independent of $\tau$. By embedding we obtain
$$
\sup_{\tau<\tau_\infty}\big(|u|_{\Om_\tau} + |u_x|_{\Om_\tau} + \la u_x \ra^{(\frac{\alpha}{2}, \alpha)}_{\Om_\tau}\big)<\infty\,.
$$ The assertion follows now from a further application of Theorem~\ref{ExLTheorem1}.
\end{proof}

\begin{remark} For simplicity we stated Corollary~\ref{c3} only for the case of the Bernstein-Nagumo condition considered in Theorem~\ref{A1}. The result is also true in the case of the weakened Bernstein-Nagumo condition considered in Theorem~\ref{Theorem_11}. 
\end{remark}

\subsection{A priori estimate on the solution}\label{Abschnitt_3}

In the previous subsections we have derived {\it a priori} estimates on the gradient and its H\"older norm of a solution $u$ to \eqref{1}, \eqref{2} in dependence on $|u|_{\Om_\tau}$. As in \cite[Chapt.~I, Thm. 2.9]{Lad}  we shall now also consider a simple situation for which one can derive an {\it a priori} bound on $|u|_{\Om_\tau}$ itself. This yields the bound \eqref{uo} required for global existence.\\


{Let there be a number $B>0$ such that $f$ and $g$ satisfy the growth conditions 
\bqn\label{209b}
z f(t,x,z,0)\leq \Phi(\vert z\vert) \vert z\vert +B, \quad zg(t,\pm\ell,z,p)\leq \Phi(\vert z\vert) \vert z\vert +B,\quad (t,x,z,p)\in\Om_T\times\R \times \R\,,
\eqn
where $\Phi$ is a non-decreasing positive function on $[0,\infty)$ with
\bqn\label{phi}
\int_0^\infty\frac{\rd r}{\Phi(r)}=\infty .
\eqn
Let  $\phi(\xi)$ be defined for $\xi\in (0,\infty)$ by
\bqnn
\int_0^{\phi(\xi)}\frac{\rd r}{\Phi(r)}=\ln \xi
\eqnn
and note that $\phi$ is monotonically increasing from $-\infty$ to $\infty$ with $\phi(1)=0$ and satisfies $\Phi(\phi(\xi))=\xi\phi'(\xi)$ for $\xi>0$. Then solutions to \eqref{1}, \eqref{2} are bounded:

\begin{proposition}\label{Theorem_14b}
Suppose \eqref{209b} with \eqref{phi} and set
\bqn\label{M}
M:= \inf\left\{ \phi(\xi)\,:\, \lambda >1\,,\,\xi= \max\left\{  1\,,\, \phi^{-1}\left(\frac{B}{(\lambda-1)\Phi(0)}\right) \,,\, \phi^{-1}\big( |u^0|_{[-\ell,\ell]}\big)\right\}\right\}.
\eqn
 If $u$ is any classical solution to \eqref{1}, \eqref{2} on $\Om_\tau$ with $\tau\le T$ subject to $u(0,\cdot)=u^0$, then
$|u|_{\Om_\tau} \leq M$.
\end{proposition}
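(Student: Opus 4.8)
The plan is to compare $u$ from above with the spatially homogeneous barrier
$w(t):=\phi(\xi_\lambda e^{\lambda t})$, where $\lambda>1$ is fixed and
$\xi_\lambda:=\max\{1,\phi^{-1}(B/((\lambda-1)\Phi(0))),\phi^{-1}(|u^0|_{[-\ell,\ell]})\}$, and afterwards to optimize the resulting estimate over $\lambda$. Since $\Phi(\phi(\xi))=\xi\phi'(\xi)$, the chain rule gives $w_t=\lambda\Phi(w)$, so $w$ solves the autonomous equation $w_t=\lambda\Phi(w)$ while being independent of $x$. It suffices to bound $\sup_{\Om_\tau}u$: the growth hypotheses \eqref{209b} are symmetric under $z\mapsto-z$ (they are imposed for all $z\in\R$), and $-u$ solves a problem of exactly the same structure with the same $\Phi$ and $B$, so the same argument applied to $-u$ will bound $-\inf_{\Om_\tau}u$ and thereby produce the two-sided estimate $|u|_{\Om_\tau}\le M$.

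First I would verify that $w$ is a (strict) supersolution. By the choice of $\xi_\lambda$ one has $w(t)\ge w(0)=\phi(\xi_\lambda)\ge\max\{|u^0|_{[-\ell,\ell]},\,B/((\lambda-1)\Phi(0))\}\ge0$; in particular $w(0)\ge u^0$ handles the initial datum, and $w\ge B/((\lambda-1)\Phi(0))$ together with the monotonicity of $\Phi$ gives $(\lambda-1)\Phi(w)\ge B/w$, whence $w_t=\lambda\Phi(w)\ge\Phi(w)+B/w$. Using \eqref{209b} at $p=0$ this yields $w_t\ge\Phi(w)+B/w\ge f(t,x,w,0)$ in the interior, and using the growth bound on $g$ it yields $w_t\ge\Phi(w)+B/w\ge g(t,\pm\ell,w,p)$ for every $p$ at the two boundary points.

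Next I would run a first–crossing–time argument. Assuming $u\le w$ fails, let $t^*\in(0,\tau]$ be the first time at which $\max_x(u(t,x)-w(t))=0$, attained at some $x^*$. The decisive feature is that at $(t^*,x^*)$ one has $u(t^*,x^*)=w(t^*)>0$, so $\Phi$ is evaluated at the \emph{same} argument for $u$ and for $w$, which is precisely what makes the quasilinear comparison close. If $x^*\in(-\ell,\ell)$, then $u_x=0$, $u_{xx}\le0$, and $u_t(t^*,x^*)\ge w_t(t^*)$ by minimality of $t^*$; equation \eqref{1} and $a>0$ give $u_t\le f(t^*,x^*,w,0)$, contradicting the supersolution inequality. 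If $x^*=\pm\ell$, then $u(t^*,\cdot)$ has a spatial maximum at the boundary, so the outward normal derivative $\pm u_x(t^*,\pm\ell)\ge0$ has exactly the sign making the dynamical term $\mp b\,u_x$ nonpositive; the boundary condition \eqref{2} then gives $u_t(t^*,\pm\ell)\le g(t^*,\pm\ell,w,u_x)$, again contradicting the supersolution inequality. Hence $u\le w$ on $\overline{\Om}_\tau$, so $\sup_{\Om_\tau}u\le\phi(\xi_\lambda e^{\lambda\tau})$, and optimizing over $\lambda>1$ (together with the symmetric bound for $-u$) yields $|u|_{\Om_\tau}\le M$.

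I expect the boundary touch point to be the main obstacle, as it is the genuinely new feature compared with \cite{Lad} and \cite{Ter}: one must combine the growth condition on $g$ with the correct sign of the outward normal derivative at a boundary maximum, so that the dynamical term $b\,u_x$ cannot spoil the inequality $u_t\le g$. A secondary, purely technical, point is that the inequalities obtained at $(t^*,x^*)$ are a priori non-strict; this is remedied in the usual way, e.g. by first comparing with $w+\e e^{Kt}$ for a large constant $K$ and letting $\e\to0$, or by replacing $\lambda$ by some $\lambda'>\lambda$, so that the comparison at the first crossing becomes strict.
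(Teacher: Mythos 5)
Your argument is correct in substance and arrives at exactly the bound the paper establishes, but it is packaged differently. The paper substitutes $u=\phi(v)$, weights $\hat v:=ve^{-\lambda t}$, and evaluates the equations at a maximum point of $\hat v$, treating separately interior points, $t_0=0$, and the lateral boundary (where it uses precisely your sign observation: $b>0$ and the outward sign of $\hat v_x$ at a boundary maximum); in every case it lands on the same algebraic core $(\lambda-1)\Phi(u)u\le B$ that drives your supersolution verification. Your barrier $w(t)=\phi(\xi_\lambda e^{\lambda t})$ is exactly the level the paper's computation certifies --- the paper's conclusion $\hat v\le \xi_\lambda$ is equivalent to $u\le w$ --- so the two proofs are near-duals: a first-crossing comparison with an explicit spatially homogeneous supersolution versus a maximum-principle computation on the transformed unknown. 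What the paper's phrasing buys is that no strictness or perturbation is ever needed: it extracts an \emph{upper bound} for $u(t_0,x_0)$ at the maximum point rather than a \emph{contradiction} at a first crossing, so the degenerate scenarios you must exclude ($t^*=0$, non-strict inequalities) simply do not arise. What your phrasing buys is transparency: the roles of \eqref{209b}, of $\lambda>1$, and of the boundary sign are laid bare, and it is visible that the argument is uniform in the gradient variable of $g$, since \eqref{209b} holds for all $p$.

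Two details of your strictness remedy deserve care, since as stated they do not quite close. Replacing $\lambda$ by $\lambda'>\lambda$ makes the supersolution inequalities strict (as $\Phi>0$) but does \emph{not} give strict separation at $t=0$ when $|u^0|_{[-\ell,\ell]}=\phi(\xi_\lambda)$, so the first-crossing time could still be $t^*=0$, where $u$ has no time derivative in the solution class of the paper; and the alternative barrier $w+\e e^{Kt}$ requires absorbing $\Phi(w+\e e^{Kt})-\Phi(w)$ into $K\e e^{Kt}$, which needs more than the assumed monotonicity of $\Phi$ (e.g.\ a local Lipschitz bound). The clean implementation is to perturb the initial level instead: compare with $w_\e(t):=\phi\big((\xi_\lambda+\e)e^{\lambda t}\big)$, which satisfies $w_\e(0)>\phi(\xi_\lambda)\ge |u^0|_{[-\ell,\ell]}$ and, since $w_\e> B/((\lambda-1)\Phi(0))$ strictly, also the strict inequalities $w_{\e,t}>\Phi(w_\e)+B/w_\e$; then let $\e\downarrow0$. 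Finally note that, exactly like the paper's own proof (cf.\ \eqref{uuu} and \eqref{218b}), your argument really establishes $u\le\inf_{\lambda>1}\phi\big(\xi_\lambda e^{\lambda T}\big)$; the missing factor $e^{\lambda T}$ in \eqref{M} is an inaccuracy of the statement shared by both proofs, not a defect specific to yours.
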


\begin{proof}
The proof follows along the lines of the proof of \cite[Chapt. I, Thm. 2.9]{Lad} and differs merely where the dynamical boundary conditions come into play. For the reader's ease we give the complete proof here. 
Put $u=\phi(v)$ and, given $\lambda>1$, set  
\bqnn
\hat v(t,x):=v(t,x)e^{-\lambda t}, \quad (t,x)\in\overline{\Om}_\tau .
\eqnn
Let $\hat v$ attain its maximum at some point $(t_0,x_0)\in\ol{\Om}_\tau$. We want to derive an upper bound on $\hat v$ and may thus restrict to the case $\hat v(t_0,x_0)\ge e^{-\lambda t_0}$, that is, $v(t_0,x_0)\ge 1$ and $u(t_0,x_0)\ge 0$. Let
\bqnn
\Gamma:=\{0\}\times [-\ell,\ell] \cup (0,\tau)\times \{\pm\ell\}
\eqnn
be the parabolic boundary of $\Om_\tau$.

{\bf Case 1:} Assume $(t_0,x_0)\in \ol{\Om}_\tau\setminus \Gamma$. In this case, $(t_0,x_0)$ does not belong to $\Gamma$ and at this point we have 
\bqn\label{214b}
\hat v(t_0,x_0)>0,\quad \hat v_x(t_0,x_0)=0,\quad \hat v_{xx}(t_0,x_0)\leq 0 ,\quad \hat v_{t}(t_0,x_0)\geq 0.
\eqn
According to \eqref{1}, 
\bqnn\label{215}
\hat v_te^{\lambda t}+\lambda \hat ve^{\lambda t}-a(t,x,u,u_x)\left(\hat v_{xx}e^{\lambda t}+\frac{\phi''(v)}{\phi'(v)}e^{2\lambda t}\hat v_x^2\right) =\frac{1}{\phi'(v)} f\big(t,x,u,u_x\big)
\eqnn
and thus, by \eqref{214b},
\bqnn
\lambda \hat v(t_0,x_0)e^{\lambda t_0} \leq \frac{1}{\phi'(v)} f\big(t_0,x_0,u,0\big).
\eqnn
Multiplying the previous inequality by $\phi'(v(t_0,x_0))u(t_0,x_0) \ge 0$ and invoking \eqref{209b} yields
$$
\lambda \phi'(v) v u \le f\big(t_0,x_0,u,0\big) u \le B+ \Phi(u) u\quad \text{at }\ (t,x)=(t_0,x_0)\,,
$$
which also reads
$$
(\lambda-1) \Phi(u) u \le B\quad \text{at }\ (t,x)=(t_0,x_0).
$$
Hence, since $\Phi$ is non-decreasing,
\bqnn
u(t_0,x_0) \leq \frac{B}{(\lambda-1)\Phi(0)} 
\eqnn
from which we deduce, since $\phi^{-1}$ is increasing,
\bqn\label{uuu}
v(t,x)\le e^{\lambda t} \hat v(t_0,x_0) =\phi^{-1}\big(u(t_0,x_0)\big) e^{\lambda (t-t_0)}\leq \phi^{-1}\left(\frac{B}{(\lambda-1)\Phi(0)}\right)  e^{\lambda T}, \qquad (t,x)\in \overline\Om_\tau . 
\eqn

{\bf Case 2:} Assume $(t_0,x_0)\in \Gamma$.
Suppose first that $t_0=0$. Then
\bqn\label{218b}
v(t,x)\le e^{\lambda t} \hat v(t_0,x_0) =e^{\lambda t} \phi^{-1}\big(u(0,x_0))\leq e^{\lambda T} \phi^{-1}\big( |u^0|_{[-\ell,\ell]}\big), \qquad (t,x)\in \overline\Om_\tau.
\eqn
If $t_0\neq 0$, then $(t_0,x_0)\in  (0,\tau)\times \{\pm \ell\}$ in which case we may use the boundary condition \eqref{2}. So,  if $(t_0,x_0)\in (0,\tau)\times \{\ell\}$, then  $b(\cdot,\ell,\cdot,\cdot)>0$ and the fact that $(t_0,\ell)$ is a point of a positive maximum imply
\bqnn
 v(t_0,\ell)>0,\quad v_{t}(t_0,\ell)= 0, \quad b\big(t_0,\ell, u(t_0,\ell), u_x(t_0,\ell)\big)v_{x}(t_0,\ell)\geq 0. 
\eqnn
From \eqref{2} and \eqref{209b} we deduce, at $(t,x)=(t_0,\ell)$,
\bqnn
\bsp
\lambda \phi'(v) \hat v e^{\lambda t} & \leq  e^{\lambda t} \phi'(v)\big(\lambda \hat v  + \hat v_t \big) + b(t,x,u,u_x) \phi'(v) e^{\lambda t} \hat v_x = g\big(t, \ell, u,u_x\big) \le B+ \Phi(u) u,
\end{split}
\eqnn
which again reads
$$
(\lambda-1) \Phi(u) u \le B\quad \text{at }\ (t,x)=(t_0,\ell)
$$
and we may proceed as in Case 1 to derive \eqref{uuu}. Clearly, the same argument holds when $(t_0,x_0)\in (0,\tau)\times \{-\ell\}$.

Combining the different cases from \eqref{uuu} and \eqref{218b} we derive that
\bqnn
\max_{\ol{\Om}_\tau}u\leq  M
\eqnn
with $M$ given in \eqref{M}. Considering $-u$ on $\ol{\Om}_\tau$ yields an estimate from below on $u$. This proves the assertion.
\end{proof}

\begin{corollary}\label{C33}
Let $\alpha\in (0,1)$ and suppose \eqref{66a}. Consider $u^0\in  C^{2+\alpha}([-\ell,\ell])$ satisfying the compatibility condition \eqref{66}. Suppose \eqref{209b} with \eqref{phi} and that
 \eqref{10}-\eqref{9bNEU} are satisfied with $M$ given in \eqref{M}. Then there exists a global solution $u\in \mC^{1+\frac{\alpha}{2}, 2+\alpha}(\OmT)$ to \eqref{1}, \eqref{2a} subject to $u(0,\cdot)=u^0$.
\end{corollary}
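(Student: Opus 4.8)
The plan is to read off the result as a synthesis of the three pieces already established: the local and maximal existence theory of Theorem~\ref{ExLTheorem1}, the \emph{a priori} sup-norm bound of Proposition~\ref{Theorem_14b}, and the global existence criterion of Corollary~\ref{c3}. No new estimate is needed; the task is to check that the hypotheses line up so that these results can be chained together.

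First I would apply Theorem~\ref{ExLTheorem1}. Under the regularity~\eqref{66a}, with $b,g$ gradient-independent and $a>0$, $b>0$, together with the compatibility condition~\eqref{66}, it furnishes a unique maximal solution $u$ to \eqref{1}, \eqref{2a} with $u(0,\cdot)=u^0$ on an interval $[0,\tau_\infty)\subset[0,T]$ satisfying $u\in\mC^{1+\frac{\alpha}{2},2+\alpha}(\Om_\tau)$ for every $\tau<\tau_\infty$. Next I would invoke Proposition~\ref{Theorem_14b}: since the growth conditions~\eqref{209b} hold with~\eqref{phi}, every classical solution on $\Om_\tau$ obeys $|u|_{\Om_\tau}\le M$, with $M$ the constant defined in~\eqref{M}. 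Applying this on each $\Om_\tau$ for $\tau<\tau_\infty$ yields the uniform bound $\sup_{\tau<\tau_\infty}|u|_{\Om_\tau}\le M$. Note here that \eqref{2a} is precisely the special case of \eqref{2} in which $b$ and $g$ do not depend on the gradient, so Proposition~\ref{Theorem_14b} applies verbatim.

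The decisive point, and the only one requiring care, is that this $M$ is \emph{the same} constant for which the Bernstein--Nagumo type hypotheses \eqref{10}--\eqref{9bNEU} are assumed in the statement of the corollary: the growth bounds on $f$ and $g$ manufacture precisely the threshold at which the gradient estimate becomes available, so there is nothing to optimize or reconcile. With the uniform bound $|u|_{\Om_\tau}\le M$ in hand and \eqref{10}--\eqref{9bNEU} valid for exactly this $M$, I would finally apply Corollary~\ref{c3}, whose conclusion is that $u\in\mC^{1+\frac{\alpha}{2},2+\alpha}(\OmT)$; in particular $\tau_\infty=T$ and the solution is global. I do not anticipate a genuine obstacle, since all three ingredients are already proved; the main thing to verify is exactly this matching of the constant $M$ across Proposition~\ref{Theorem_14b} and the gradient estimate underlying Corollary~\ref{c3}, which is built into the definition~\eqref{M} and the way the hypotheses are phrased.
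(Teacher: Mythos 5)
Your proposal is correct and follows essentially the same route as the paper: the paper's proof simply cites Theorem~\ref{ExLTheorem1}, Theorem~\ref{A1}, Corollary~\ref{Theorem_15}, and Proposition~\ref{Theorem_14b}, while you route the gradient estimates through Corollary~\ref{c3}, which is exactly the packaged form of those same ingredients. Your observation that the constant $M$ from \eqref{M} must coincide with the one in hypotheses \eqref{10}--\eqref{9bNEU} is precisely the point built into the corollary's statement, so the chaining goes through as you describe.
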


\begin{proof}
This now follows from Theorem~\ref{ExLTheorem1}, Theorem~\ref{A1}, Corollary~\ref{Theorem_15}, and Proposition~\ref{Theorem_14b}.
\end{proof}


\begin{remark}
For simplicity we stated the global existence result of Corollary~\ref{C33} only for the case of the Bernstein-Nagumo condition considered in Theorem~\ref{A1}. The result is also true in the case of the weakened Bernstein-Nagumo condition considered in Theorem~\ref{Theorem_11}.
\end{remark}

As a final remark let us point out that, as in \cite{Ter}, we could replace \eqref{1} by the more general equation
\bqnn
u_t=F(t,x,u,u_x,u_{xx})
\eqnn
and derive similar results provided that $F$ is differentiable with respect to its last variable.

\section{Appendix: Notation and Parabolic H\"older Spaces}\label{app}

 We provide in this appendix the definition of parabolic H\"older spaces, for more details we refer e.g. to \cite{Lad, Lie}.
 Recall that $\Omega =(-\ell,\ell)$ and $\Omega_T =(0,T)\times \Om$. Let $Q$ be an interval in $\R$ and $\alpha\in (0,1]$. Then $u:Q\rightarrow \R$ is uniformly $\alpha$-H\"older continuous if 
\bqnn
[u]^{(\alpha)}_Q:=\us{x,y\in Q}{\sup} \frac{|u(x)-u(y)|}{|x-y|^\alpha} <\infty.
\eqnn
We say that $u:Q\rightarrow \R$ is $\alpha$-H\"older continuous if every point in $Q$ has a neighborhood $Q'\subset Q$ such that $u\vert_{Q'}$ is uniformly $\alpha$-H\"older continuous. Note that $\alpha$-H\"older continuous functions are uniformly $\alpha$-H\"older continuous on compact sets. 
Clearly, $1$-H\"older continuous functions are Lipschitz continuous. 
Given $k\in \N$ and $\alpha\in (0,1)$ we let $C^{k+\alpha}(Q)$ be the space of all $k$-times continuously differentiable functions on $Q$ such that the $k$-th derivative is $\alpha$-H\"older continuous.
If $u:S\rightarrow\R$ is bounded on a set $S$, we denote its supremum norm by $|u|_{S}$, i.e.
\bqnn
|u|_{S}:=	\us{x\in S}{\sup}|u(x)|.
\eqnn
If $Q$ is bounded and $k\in \N$, $\alpha\in (0,1)$ given, we put 
\bqnn
|u|^{(k+\alpha)}_{Q}:=\sml_{j\leq k}|D^j u|_{Q}+[D^k u]^{(\alpha)}_Q
\eqnn
for $u\in C^{k+\alpha}(\bar Q)$.

Let $T>0$ and $Q_T=(0,T)\times Q$. We let $C^{1,2}(Q_T)$ be the space of all continuous real-valued functions defined on $Q_T$ having continuous derivatives $u_t, u_{xx}$ on $Q_T$ and we use a similar notation on $\overline Q_T$. 

The parabolic H\"older spaces
  $\mC^{\frac{k+\alpha}{2}, k+\alpha}(Q_T)$ for $k\in \N$ and $\alpha\in (0,1)$ are defined as the set of functions $u : Q_T\rightarrow \R$ having finite norm 
	\bqnn\label{118}
|u|^{(\frac{k+\alpha}{2}, k+\alpha)}_{Q_T}:=\sml_{2j_0+j\leq k}|D_t^{j_0}D_x^j u|_{Q_T}+\la u \ra^{(\frac{k+\alpha}{2}, \,k+\alpha)}_{Q_T} < \infty,
\eqnn
where, if $k=0$,
\bqnn
\la u \ra^{(\frac{\alpha}{2}, \alpha)}_{Q_T}  :=\la u \ra^{(\frac{\alpha}{2})}_{t,Q_T}+\la u \ra^{(\alpha)}_{x,Q_T}
\eqnn
with
\bqnn
\la u \ra^{(\alpha )}_{t,Q_T}:=\us{x\in Q}{\sup}[u(\cdot,x)]^{( \alpha)}_{(0,T)}, \qquad 
\quad \la u \ra^{(\alpha)}_{x,Q_T}:=\us{t\in (0,T)}{\sup}[u(t, \cdot)]^{(\alpha)}_{\Om},
\eqnn
and, if $k \geq 1$,
\bqn\label{86}
\bsp
\la u \ra^{(\frac{k+\alpha}{2},\, k+\alpha)}_{Q_T}  &:=\sml_{2j_0+j=k}\la D_t^{j_0}D_x^j u\ra_{Q_T}^{(\frac{\alpha}{2},\alpha)} \; + \;  \sml_{2j_0+j=k-1}\la D_t^{j_0}D_x^j u\ra_{t,Q_T}^{(\frac{1+\alpha}{2})}  \,.
\end{split}
\eqn
Since globally H\"older continuous functions are uniformly continuous, it follows that
\bqnn
D_t^{j_0}D_x^j u\in C(\overline{Q}_T), \quad 2j_0+j\leq k,
\eqnn
for $u\in \mC^{\frac{k+\alpha}{2}, k+\alpha}(Q_T)$.

The following interpolation inequalities are quite useful. We refer to \cite[Thm.8.8.1]{Kry} (see also \cite[Exercise 8.8.2]{Kry}) for a proof.

\begin{proposition}\label{FR1}
There is a constant $c=c(Q)$ such that for $u\in \mC^{1+\frac{\alpha}{2},2+\alpha}(Q_T)$:
\bqn\label{71}
\bsp
|u_t|_{Q_T}+|u_{xx}|_{Q_T}&\leq c\,U_{2+\alpha}^{1/(2+\alpha)}U_{0}^{1-1/(2+\alpha)} ,\\
\la u_x \ra_{Q_T}^{(\frac{\alpha}{2},\alpha)}&\leq c\,U_{2+\alpha}^{(1+\alpha)/(2+\alpha)}U_{0}^{1-(1+\alpha)/(2+\alpha)} ,\\
\la u \ra_{Q_T}^{(\frac{\alpha}{2},\alpha)}&\leq c\,U_{2+\alpha}^{\alpha/(2+\alpha)}U_{0}^{1-\alpha/(2+\alpha)} ,\\
\end{split}
\eqn
where $U_0:=|u|_{Q_T}$ and $U_{2+\alpha}:=\la u \ra^{(1+\frac{\alpha}{2},2+\alpha)}_{Q_T}$.
\end{proposition}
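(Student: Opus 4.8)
The plan is to establish all three estimates by the classical finite-difference (scaling) method, which reduces each to a pointwise inequality containing a free length scale that is afterwards optimized; this is exactly the mechanism underlying \cite[Thm.~8.8.1]{Kry}, which one may also simply invoke. The single elementary tool is the one-variable identity, valid for a $C^{1}$ function $g$ on an interval and small $h>0$,
\[
g'(\xi)=\frac{g(\xi+h)-g(\xi)}{h}-\frac1h\int_0^h\big(g'(\xi+s)-g'(\xi)\big)\,\rd s\,,
\]
which yields $|g'(\xi)|\le 2\,|g|_{Q}\,h^{-1}+(1+\beta)^{-1}[g']^{(\beta)}_Q\,h^{\beta}$. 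Applying it in the spatial variable at a fixed time, first with $g=u(t,\cdot)$ and then with $g=u_x(t,\cdot)$, controls $|u_{xx}|_{Q_T}$ and the spatial part of $\la u_x\ra^{(\frac\alpha2,\alpha)}_{Q_T}$; the parabolic character enters by always coupling a time increment of size $h^2$ to a space increment of size $h$, so that $u_t$ (which carries the same parabolic order $2$ as $u_{xx}$) and the purely temporal brackets $\la\cdot\ra^{(\frac{1+\alpha}{2})}_{t,Q_T}$ are handled on the same footing as their spatial analogues.

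After these pointwise bounds, a quantity of parabolic order $m$---with $m=2$ for $|u_t|_{Q_T}+|u_{xx}|_{Q_T}$, $m=1+\alpha$ for $\la u_x\ra^{(\frac\alpha2,\alpha)}_{Q_T}$, and $m=\alpha$ for $\la u\ra^{(\frac\alpha2,\alpha)}_{Q_T}$---is dominated, uniformly in the free scale $\rho>0$, by
\[
c\,U_0\,\rho^{-m}+c\,U_{2+\alpha}\,\rho^{\,2+\alpha-m}\,.
\]
Choosing $\rho=(U_0/U_{2+\alpha})^{1/(2+\alpha)}$ balances the two contributions and produces exactly the asserted products $U_{2+\alpha}^{m/(2+\alpha)}\,U_0^{1-m/(2+\alpha)}$. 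The exponents are in fact forced by parabolic homogeneity: under the rescaling $u\mapsto u(\lambda^2 t,\lambda x)$ one has $U_0\mapsto U_0$ and $U_{2+\alpha}\mapsto\lambda^{2+\alpha}U_{2+\alpha}$, while an order-$m$ quantity acquires a factor $\lambda^{m}$, so that $\theta=m/(2+\alpha)$ is the only scale-invariant exponent.

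The principal obstacle is that $Q_T$ is a bounded cylinder, so the scale $\rho$ may not exceed a fixed fraction of $\min\{\mathrm{diam}(Q),\sqrt{T}\}$ if the shifted points are to stay in $\overline{Q}_T$. When the balanced value $\rho^{\ast}=(U_0/U_{2+\alpha})^{1/(2+\alpha)}$ is admissible the clean product follows immediately; otherwise one evaluates at the largest admissible $\rho$ and absorbs the remaining lower-order contribution, which is precisely where the domain-dependent constant $c=c(Q)$ originates. The most delicate bookkeeping resides in the anisotropic Hölder brackets---matching the time step $h^2$ to the space step $h$ and verifying that the remainder of the finite-difference identity is controlled by the single top seminorm $U_{2+\alpha}$ rather than by several separate pieces---which I would carry out component by component (spatial, temporal, and mixed) for each of the three brackets, along the lines of \cite[Chap.~8]{Kry}.
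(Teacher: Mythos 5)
Your strategy coincides with the paper's, which offers no argument beyond the citation of \cite[Thm.~8.8.1]{Kry}: finite differences at a free parabolic scale $\rho$, a bound of the form $c\,U_0\,\rho^{-m}+c\,U_{2+\alpha}\,\rho^{2+\alpha-m}$ for a quantity of parabolic order $m$, and optimization in $\rho$. But two points in your write-up do not survive scrutiny. First, your own homogeneity computation contradicts the first displayed inequality: for $|u_t|_{Q_T}+|u_{xx}|_{Q_T}$ you correctly take $m=2$, and balancing then gives $U_{2+\alpha}^{2/(2+\alpha)}U_0^{\alpha/(2+\alpha)}$, not the printed $U_{2+\alpha}^{1/(2+\alpha)}U_0^{1-1/(2+\alpha)}$. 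You nevertheless assert that your exponents are ``exactly the asserted products'', which is false for that line. (The printed exponent is almost certainly a typo, since the other two lines do follow the pattern $m/(2+\alpha)$, but a blind proof should either prove the stated inequality or flag the discrepancy rather than silently substitute the scaling-consistent exponent.)

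Second, the step ``otherwise one evaluates at the largest admissible $\rho$ and absorbs the remaining lower-order contribution'' cannot yield the clean product form, and this is a genuine gap rather than a bookkeeping issue. In the regime $U_0>U_{2+\alpha}\,\rho_{\max}^{2+\alpha}$, evaluation at $\rho_{\max}$ gives only the bound $c(Q)\,U_0$, and no absorption can dominate this by $c\,U_{2+\alpha}^{m/(2+\alpha)}U_0^{1-m/(2+\alpha)}$: since $U_{2+\alpha}$ is a pure seminorm, $u(t,x)=x$ has $U_{2+\alpha}=0$ while $\langle u\rangle^{(\alpha/2,\alpha)}_{Q_T}>0$, and $u(t,x)=x^2-2t$ has $U_{2+\alpha}=0$ while $|u_t|_{Q_T}+|u_{xx}|_{Q_T}=4$; so on a bounded cylinder the multiplicative inequality with the seminorm on the right fails for such functions. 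What your argument actually establishes on $Q_T$ is the version with an additive correction, namely a bound by $c\,\bigl(U_{2+\alpha}^{m/(2+\alpha)}U_0^{1-m/(2+\alpha)}+U_0\bigr)$, equivalently the multiplicative form with the full norm $|u|^{(1+\frac{\alpha}{2},2+\alpha)}_{Q_T}$ in place of $U_{2+\alpha}$ --- which is the form in which the proposition is applied in the paper anyway. Krylov's interpolation theorem is proved for functions on an unbounded space-time domain, where such low-degree caloric polynomials are excluded by the finiteness of $U_0$; transplanting it to the bounded cylinder $Q_T$ requires either an extension argument or the additive term, a point addressed neither by your sketch nor by the paper's one-line citation.
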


\end{document}